\tikzset{commutative diagrams/.cd}
\tikzset{>=stealth}
\newtheorem{thm}{Theorem}[section]
\newtheorem{cor}[thm]{Corollary}
\newtheorem{prop}[thm]{Proposition}
\newtheorem{lem}[thm]{Lemma}
\newtheorem{quest}[thm]{Question}
\newtheorem*{thm*}{Theorem}
\theoremstyle{definition}
\newtheorem{defn}[thm]{Definition}
\theoremstyle{remark}
\newtheorem{rem}[thm]{Remark}
\newcommand{\bD}{\mathbb{D}}
\newcommand{\bF}{\mathbb{F}}
\newcommand{\bQ}{\mathbb{Q}}
\newcommand{\bR}{\mathbb{R}}
\newcommand{\bS}{\mathbb{S}}
\newcommand{\bZ}{\mathbb{Z}}
\newcommand\Diff{\mathrm{Diff}}
\newcommand\BDiff{\mathrm{BDiff}}
\newcommand\dDiff{\mathrm{Diff}^{\delta}}
\newcommand\BdDiff{\mathrm{BDiff}^{\delta}}
\newcommand{\hcoker}{/\!\!/}
\let\c@equation\c@thm
\numberwithin{equation}{section}
\title{Braid groups and discrete diffeomorphisms of the punctured disk}
\author{Sam Nariman}
\email{sam@math.northwestern.edu}
\address{Department of Mathematics\\
  Northwestern University\\
2033 Sheridan Road\\
Evanston, IL  60208}
\begin{document}
\begin{abstract}
We show that the group cohomology of the diffeomorphisms of the  disk with $n$ punctures has the cohomology of the braid group of $n$ strands as the summand.  As an application of this method, we also prove that there is no cohomological obstruction to lifting the ``standard" embedding $\mathrm{Br}_{2g+2}\hookrightarrow \mathrm{Mod}_{g,2}$ to a group homomorphism between diffeomorphism groups. 

\end{abstract}
\maketitle
\section{Statement of the results}\label{braids}
Let $S$ be a surface possibly with boundary and let ${\bf z}\subset S$ be a subset of $|{\bf z}|=n$ points on the surface. We shall write $\Diff(S -{\bf z},\partial S)$ for the topological group (equipped with the $C^{\infty}$-topology) of the orientation preserving smooth diffeomorphisms of the punctured surface $S -{\bf z}$ whose supports are away from the boundary $\partial S$.  To study certain algebraic properties of this group, we also consider the same group $\dDiff(S -{\bf z},\partial S)$ but with the discrete topology. Recall the mapping class group $\mathrm{Mod}(S,{\bf z})$ is the group of connected components of the topological group $\Diff(S -{\bf z},\partial S)$ and there are natural maps
\[
\dDiff(S -{\bf z},\partial S)\to \Diff(S -{\bf z},\partial S)\to \mathrm{Mod}(S,{\bf z}),
\]
where the first map is the identity homomorphism and the second is taking quotient by the identity component of the group $\Diff(S -{\bf z},\partial S)$.

The {\it realization problem} for a subgroup $H\hookrightarrow  \mathrm{Mod}(S,{\bf z})$ is concerned with whether one can lift $H$ to $\dDiff(S -{\bf z},\partial S)$ as a subgroup.  One of the positive results in this direction is the solution to the Nielsen realization problem by Kerckhoff \cite{kerckhoff1983nielsen} who proved that any finite subgroup of the mapping class group of a surface can be realized by diffeomorphisms. On the other hand, for surfaces with no punctures Morita \cite{morita1987characteristic}  used the Bott vanishing theorem to show that the induced map between group cohomologies
\[
H^*(\mathrm{Mod}(S);\bQ)\to H^*(\dDiff(S ,\partial S);\bQ),
\]
has a kernel while the genus $g(S)$ is larger than $10$ and he concluded that  finite index subgroups of the mapping class group of the surface $S$ cannot be realized as a subgroup of the diffeomorphism group of the surface. 

\subsection{Splitting of the cohomology} For the case of the disk $S=\bD^2$, Salter-Tshishiku \cite{salter2015nonrealizability} showed that the braid group $\mathrm{Mod}(\bD^2, {\bf z})$ cannot be realized as a subgroup of $\dDiff(\bD^2-{\bf z},\partial \bD^2)$ for $|{\bf z}|\geq 5$ using dynamical system techniques but unlike Morita's theorem we will show that
\begin{thm}\label{punctures}
The map
\[
H^*(\mathrm{Mod}(\bD^2, {\bf z});A)\to H^*(\dDiff(\bD^2-{\bf z},\partial\bD^2);A),
\]
is split injective in all cohomological degrees and for all abelian groups $A$.
\end{thm}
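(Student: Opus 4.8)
The plan is to first reduce the statement to homotopy theory and then build the splitting by a Mather--Thurston-type comparison with a configuration space. Because $\bD^2-{\bf z}$ has non-positive Euler characteristic, the identity component $\Diff_0(\bD^2-{\bf z},\partial\bD^2)$ is weakly contractible (for the disk this is Smale's theorem; adding punctures and boundary does not change this). Combined with $\pi_0\Diff(\bD^2-{\bf z},\partial\bD^2)=\mathrm{Mod}(\bD^2,{\bf z})$, this yields $B\Diff(\bD^2-{\bf z},\partial\bD^2)\simeq K(\mathrm{Br}_n,1)=B\mathrm{Mod}(\bD^2,{\bf z})$, where $n=|{\bf z}|$. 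Under this identification the map of Theorem~\ref{punctures} is the map on $A$-cohomology induced by the natural comparison
\[
\phi\colon B\dDiff(\bD^2-{\bf z},\partial\bD^2)\lra B\Diff(\bD^2-{\bf z},\partial\bD^2)\simeq C_n(\bR^2),
\]
$C_n(\bR^2)$ being the space of unordered $n$-point subsets of the plane. So it is enough to exhibit a space $W$ together with maps $\psi\colon W\to B\dDiff(\bD^2-{\bf z},\partial\bD^2)$ and $u\colon W\to C_n(\bR^2)$ with $\phi\circ\psi\simeq u$ and with $u$ admitting a homotopy section: for then the section gives a retraction of $u^*$, and composing it with $\psi^*$ retracts $\phi^*$; crucially this retraction is independent of $A$.

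I would take $W=C_n(\bR^2)\hcoker\dDiff_c(\bR^2)$, the homotopy orbit space of $C_n(\bR^2)$ under the discrete group $\dDiff_c(\bR^2)$ of compactly supported diffeomorphisms of the plane, and for $u$ the ``comparison to the continuous structure group'': since $\Diff_c(\bR^2)$ is contractible (Smale), the Borel fibration $C_n(\bR^2)\to C_n(\bR^2)\hcoker\Diff_c(\bR^2)\to B\Diff_c(\bR^2)$ has contractible base, so $C_n(\bR^2)\hcoker\Diff_c(\bR^2)\simeq C_n(\bR^2)$ and the comparison map refines to $u\colon W\to C_n(\bR^2)$. The inclusion $j\colon C_n(\bR^2)\to W$ of the fibre of $W\to B\dDiff_c(\bR^2)$ over the basepoint is then a homotopy section of $u$, because $u\circ j$ is, up to the above equivalence, the inclusion of a fibre of a fibration over the contractible space $B\Diff_c(\bR^2)$. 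Hence $u^*$ is split injective with retraction $j^*$, for every abelian group $A$.

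The heart of the matter is the construction of $\psi\colon C_n(\bR^2)\hcoker\dDiff_c(\bR^2)\to B\dDiff(\bD^2-{\bf z},\partial\bD^2)$ satisfying $\phi\circ\psi\simeq u$. Informally: from a flat $\bR^2$-bundle $E\to X$ (equivalently a codimension-$2$ foliated disk bundle) together with a continuously varying $n$-point subset $Z\subset E$, manufacture a flat punctured-disk bundle whose underlying bundle is $E\setminus Z\to X$; since $E$ is topologically trivial, the latter is classified by the map $X\to C_n(\bR^2)$ recording $Z$, which is exactly $u$, so $\phi\circ\psi\simeq u$ by construction. The only subtlety is that the restriction to $E\setminus Z$ of the foliation on $E$ is transverse to the fibres but may fail to have complete holonomy along paths that run into $Z$; one repairs this by deflecting the foliation inside a tube around $Z$ (altering the flat structure only by germs near the punctures) or, more robustly, by running the comparison relatively and invoking that diffeomorphisms supported near the punctures do not affect the homology of the classifying space --- a relative Mather--Thurston / parametrised isotopy-extension statement in the spirit of Thurston's theorem. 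What this really establishes is a homology equivalence $B\dDiff(\bD^2-{\bf z},\partial\bD^2)\simeq_{H_*}C_n(\bR^2)\hcoker\dDiff_c(\bR^2)$ intertwining $\phi$ with $u$; it is emphatically not a homotopy equivalence (the two spaces have different fundamental groups), and $\psi$ does not refine to a group homomorphism $\mathrm{Br}_n\to\dDiff(\bD^2-{\bf z},\partial\bD^2)$, consistently with the non-realizability theorem of Salter--Tshishiku for $n\ge 5$. This homological comparison is where essentially all the work lies; I expect it to be the main obstacle, and the rest is formal.

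Granting it, the theorem is immediate: on $H^*(C_n(\bR^2);A)=H^*(\mathrm{Mod}(\bD^2,{\bf z});A)$ one has $j^*\circ\psi^*\circ\phi^*=j^*\circ(\phi\circ\psi)^*=j^*\circ u^*=\mathrm{id}$, so $\phi^*$ is split injective for all $A$.
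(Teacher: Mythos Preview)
Your overall strategy---produce an auxiliary space over $C_n(\bR^2)$ that is homology equivalent to $\BdDiff(\bD^2-{\bf z},\partial\bD^2)$ and whose projection to $C_n(\bR^2)$ visibly splits---is the same as the paper's. The gap is in your construction of $\psi$, and it is not a technicality you can wave away.

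Your $\psi$ is supposed to take a flat $\bR^2$-bundle $E\to X$ together with an \emph{arbitrary} fibrewise configuration $Z$ and return a flat $(\bD^2-{\bf z})$-bundle on $E\setminus Z$. As you correctly diagnose, the restricted foliation has incomplete holonomy whenever a leaf runs into $Z$, and generically every leaf does: $Z$ is an unconstrained multisection, completely unrelated to the flat structure. There is no local ``deflection in a tube around $Z$'' that repairs this. Completeness is a global condition on each leaf, and any modification would have to be $\dDiff_c(\bR^2)$-equivariant in the fibre while simultaneously depending continuously on the non-invariant datum $Z$; there is no such construction. Your fallback (``a relative Mather--Thurston / parametrised isotopy-extension statement'') is not a known theorem you can cite---it is essentially the homology equivalence you are trying to establish, repackaged. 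So as written, $\psi$ does not exist, and the asserted homology equivalence $W\simeq_{H_*}\BdDiff(\bD^2-{\bf z},\partial\bD^2)$ is unproven.

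The paper avoids this entirely by reversing the direction of the comparison. Mather--Thurston gives a homology isomorphism \emph{out of} $\BdDiff(\bD^2-{\bf z},\partial\bD^2)$ into $\mathcal{M}_\nu(\bD^2,{\bf z})=\mathrm{Bun}_{\partial}(T(\bD^2-{\bf z}),\nu^*\gamma)\hcoker\Diff(\bD^2,{\bf z})$, where a ``flat structure'' is relaxed to a bundle map covering a lift of $\tau$ to $\mathrm{BS}\Gamma_2$. One then forms the configuration bundle space
\[
\mathrm{CBun}_n(\bD^2)=\{({\bf x},f):{\bf x}\in C_n(\bD^2),\ f\in\mathrm{Bun}_{\partial}(T(\bD^2-{\bf x}),\nu^*\gamma)\},
\]
which maps naturally to $\mathcal{M}_\nu(\bD^2,{\bf z})$ and fibres over $C_n(\bD^2)$ via $\pi({\bf x},f)={\bf x}$. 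The section of $\pi$ is trivial: the point foliation on $\bD^2$ gives a canonical $s_0\in\mathrm{Bun}_{\partial}(T\bD^2,\nu^*\gamma)$, and one sends ${\bf x}\mapsto({\bf x},\,s_0|_{\bD^2-{\bf x}})$. The crucial point is that a bundle map, unlike a genuine transverse foliation, restricts painlessly across punctures---this is precisely the operation your $\psi$ cannot perform. The splitting of $\phi^*$ then comes from the composite $C_n(\bD^2)\to\mathrm{CBun}_n(\bD^2)\to\mathcal{M}_\nu(\bD^2,{\bf z})$ together with the Mather--Thurston identification, and no map \emph{into} $\BdDiff$ is ever required.
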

\begin{rem}
For $|z|\geq 5$, the proof of the above theorem implies a slightly stronger result that the induced map between the plus constructions of the classifying spaces 
\[
\BdDiff(\bD^2-{\bf z},\partial\bD^2)^+\to \mathrm{BMod}(\bD^2, {\bf z})^+,
\]
admits a section, where $+$ means Quillen plus construction at the commutator subgroups.
\end{rem}
\begin{rem}
For homeomorphism groups, Thurston observed in \cite{mathoverflow} that for $|{\bf z}|=3$,
\[
\mathrm{Homeo}(\bD^2-{\bf z},\partial\bD^2)\rightarrow \mathrm{Mod}(\bD^2, {\bf z}),
\]
admits a section.
\end{rem}
The situation of realizing braid groups by diffeomorphisms of the punctured disk is similar to that of realizing the mapping class group of a surface of genus $g>5$ by the surface homeomorphism group. That is to say, in that case the mapping class group and the homeomorphism group have the same homology (see \cite{mcduff1980homology}) but still there is no section from the mapping class group of such a surface to its homeomorphism group (see \cite{markovic2007realization}). But the difference is, unlike the case of surface homeomorphisms, the group homology of $\dDiff(\bD^2-{\bf z},\partial\bD^2)$ is much bigger than the homology of $\mathrm{Mod}(\bD^2, {\bf z})$. 

In fact a more general result holds for any surface $S$ which specializes to \Cref{punctures} for $S=\bD^2$. Let $C_n(S)$ be the configuration space of $n$ unordered points on the surface $S$. The surface braid group $\mathrm{Br}_n(S)$ is the fundamental group of the space $C_n(S)$. Let $\Diff(S,{\bf z})$ denote the orientation preserving diffeomorphisms of $S$ that fix the marked points ${\bf z}$ as a set and are the identity near the boundary of $S$ and let $\mathrm{Mod}(S,{\bf z})$ be the mapping class group of the surface $S$ with $n$ marked points ${\bf z}$ in the interior of $ S$. There is a so called {\it point-pushing} map
\[
\mathcal{P}: \mathrm{Br}_{|{\bf z}|}(S)\rightarrow \mathrm{Mod}(S,{\bf z})
\]
that is induced by the long exact sequence of homotopy groups for the fibration $\Diff(S,{\bf z})\rightarrow \Diff(S, \partial S)\rightarrow C_{|{\bf z}|}(S)$. Nick Salter and Bena Tshishiku \cite{salter2015nonrealizability} proved that for a surface $S$ of genus $g$ and $b$ boundary components if $g+2b\geq 2$ and $|{\bf z}|\geq 5$, the point-pushing map has no lift to the diffeomorphisms of the punctured surface. But we show that there is no cohomological obstruction to realize the surface braid group as diffeomorphisms of the punctured surface:
\begin{thm}\label{surfaces}
For an orientable surface $S$ which is not homeomorphic to sphere, there exists a lift $\alpha^*$ for $\mathcal{P}^*$ in the diagram
 \[
 \begin{tikzpicture}[node distance=1.8cm, auto]
  \node (A) {$H^*(\mathrm{Br}_{|{\bf z}|}(S);A)$};
  \node (B) [right of=A, node distance=3.8cm] {$H^*(\mathrm{Mod}(S,{\bf z});A),$};
  \node (C) [above of= B ] {$H^*(\dDiff(S-{\bf z},\partial S);A)$};  
   \draw [<-] (A) to node {$\mathcal{P}^*$}(B);
  \draw [<-] (A) to node {$\alpha^*$}(C);
  \draw [<-] (C) to node {$$}(B);
\end{tikzpicture}
\]
for all cohomological degrees and for all abelian groups $A$.
\end{thm}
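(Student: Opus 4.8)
\emph{Proof strategy.} The plan is to work on classifying spaces and reduce the required factorization to the collapse of a Serre spectral sequence. Put $n=|{\bf z}|$ and $M=S-{\bf z}$. As $S$ is not the sphere --- and excluding a short list of surfaces of very small complexity, for which the statement is elementary --- the identity components of $\Diff(S,\partial S)$, of $\Diff(S,{\bf z})$ and of $\Diff(M,\partial S)$ are all contractible, so $\BDiff(S,\partial S)$, $\BDiff(S,{\bf z})$ and $\BDiff(M,\partial S)$ are $K(\pi,1)$'s; in particular $\BDiff(S,{\bf z})\simeq\BDiff(M,\partial S)\simeq\mathrm{BMod}(S,{\bf z})$. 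Delooping the fibration $\Diff(S,{\bf z})\to\Diff(S,\partial S)\to C_n(S)$ of the introduction gives a fibration sequence
\[
C_n(S)\;\xrightarrow{\ \mathrm{B}\mathcal{P}\ }\;\mathrm{BMod}(S,{\bf z})\;\xrightarrow{\ \pi\ }\;\BDiff(S,\partial S),
\]
whose fiber inclusion $\mathrm{B}\mathcal{P}\colon C_n(S)=B\mathrm{Br}_n(S)\hookrightarrow\mathrm{BMod}(S,{\bf z})$ induces exactly the map $\mathcal{P}^*$ of the statement on cohomology. Being a fiber inclusion followed by the projection, $\pi\circ\mathrm{B}\mathcal{P}\colon C_n(S)\to\BDiff(S,\partial S)$ is \emph{canonically nullhomotopic} --- the homotopy-theoretic shadow of the elementary fact that a point-pushing diffeomorphism becomes isotopic to the identity once the marked points are forgotten. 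Let $\beta\colon\BdDiff(M,\partial S)\to\mathrm{BMod}(S,{\bf z})$ be induced by passing to the discrete topology, so that $\beta^*$ is precisely the map $H^*(\mathrm{Mod}(S,{\bf z});A)\to H^*(\dDiff(S-{\bf z},\partial S);A)$ in the theorem, and form the homotopy pullback $P:=C_n(S)\times_{\mathrm{BMod}(S,{\bf z})}\BdDiff(M,\partial S)$, with projections $q\colon P\to C_n(S)$ and $p\colon P\to\BdDiff(M,\partial S)$. A one-line diagram chase shows: if $q^*\colon H^*(C_n(S);A)\to H^*(P;A)$ has an $A$-linear retraction $\sigma$, then $\alpha^*:=\sigma\circ p^*$ satisfies $\alpha^*\circ\beta^*=\mathcal{P}^*$, which is exactly the factorization the theorem asks for. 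So the whole theorem reduces to showing that $q^*$ is split injective for every abelian group $A$.

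Now $q$ is a fibration with fiber $\mathcal{F}=\mathrm{hofib}(\beta)$, and in fact $P\simeq\mathrm{hofib}(\pi\circ\beta)$, so the whole configuration lives over $\BDiff(S,\partial S)$. Two remarks structure what remains. First, the monodromy of $\pi_1(C_n(S))=\mathrm{Br}_n(S)$ on $H^*(\mathcal{F};A)$ factors through $\pi_1(\BDiff(S,\partial S))=\mathrm{Mod}(S)$ via the composite $\mathrm{Br}_n(S)\to\mathrm{Mod}(S)$, which is zero by the nullhomotopy above; hence the local system is trivial. Second, by the Mather--Thurston theorem applied to diffeomorphisms of the open surface $M$ that are the identity near $\partial S$, the fiber $\mathcal{F}$ is homology equivalent to a space of sections over $M$ (standard near $\partial S$) of the bundle with fiber $\overline{B\Gamma_2}$ associated to $TM$; since $M$ is parallelizable this is a mapping space into $\overline{B\Gamma_2}$, and $\overline{B\Gamma_2}$ is $2$-connected by Thurston's theorem, so $\mathcal{F}$ is in particular simply connected. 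Combining the two, the $E_2$-page of the Serre spectral sequence of $\mathcal{F}\to P\xrightarrow{q}C_n(S)$ is $H^*(C_n(S);A)\otimes_A H^*(\mathcal{F};A)$, and the claim is that it collapses, so that Leray--Hirsch supplies the required natural retraction of $q^*$. I would prove the collapse by running the construction above \emph{fiberwise over} $\BDiff(S,\partial S)$ --- assembling $\beta$ and the Mather--Thurston equivalence over the universal $S$-bundle --- so that $\mathcal{F}\to P\to C_n(S)$ is the restriction, along the nullhomotopic map $\pi\circ\mathrm{B}\mathcal{P}$, of a fibration already defined over $\BDiff(S,\partial S)$; a surviving differential would then produce an obstruction living over $\BDiff(S,\partial S)$, against which the nullhomotopy is exactly the needed input. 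When $S=\bD^2$ the base $\BDiff(\bD^2,\partial\bD^2)$ is contractible, this fiberwise bookkeeping is vacuous, and the argument specializes to Theorem~\ref{punctures}; over a point the splitting can moreover be promoted, after Quillen plus construction, to an actual section of classifying spaces, which is the refinement recorded in the remark after that theorem.

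I expect the hard step to be exactly the collapse of this spectral sequence --- that is, making the Mather--Thurston description of $\BdDiff(M,\partial S)$ precise and genuinely fiberwise over $\BDiff(S,\partial S)$ and verifying that the point-pushing cohomology lies only in the ``base direction'' of that parametrization. Two technical points feed in: the form of Mather--Thurston required is the one for diffeomorphisms that are the identity merely near $\partial S$, possibly wild near the punctures, which has to be matched with the compactly supported version and its section space; and the homotopy-theoretic identifications $\Diff(M,\partial S)\simeq\Diff(S,{\bf z})$ and the injectivity of point-pushing, which demand that $S$ avoid the low-complexity exceptions. Finally, it is worth emphasizing that by the non-realizability theorem of Salter--Tshishiku the fibration $q$ admits no section of spaces once $n\ge5$, so the splitting obtained here is of necessity only cohomological; in particular there is no shortcut by operadic rigidity, since $\coprod_n\BdDiff(\bD^2-n\text{ points},\partial\bD^2)$ supports no continuous action of the little $2$-disks operad --- the discrete topology on the diffeomorphism groups obstructs it --- and one genuinely has to trade operadic structure for the high connectivity of $\overline{B\Gamma_2}$ together with the aspherical, configuration-space nature of $C_n(S)$.
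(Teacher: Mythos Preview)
Your reduction to the split–injectivity of $q^*\colon H^*(C_n(S);A)\to H^*(P;A)$ is correct, but from there your argument diverges from the paper's and acquires a real gap.

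\medskip
\textbf{What the paper does.} Rather than analyze the Serre spectral sequence of $q$, the paper replaces $\BdDiff(S-{\bf z},\partial S)$ by its Mather--Thurston model $\mathcal{M}_\nu(S,{\bf z})=\mathrm{Bun}_{\partial S}(T(S-{\bf z}),\nu^*\gamma)\hcoker\Diff(S,{\bf z})$ and introduces the space of \emph{configuration bundle maps} $\mathrm{CBun}_{n}(S)$, whose points are pairs $({\bf x},f)$ with ${\bf x}\in C_n(S)$ and $f$ a bundle map $T(S-{\bf x})\to\nu^*\gamma$. The projection $\pi\colon\mathrm{CBun}_n(S)\to C_n(S)$ is a fibration with fiber $\mathrm{Bun}_{\partial S}(T(S-{\bf z}),\nu^*\gamma)$, and this fibration has an \emph{explicit space-level section}: the bundle map $s_0\colon TS\to\nu^*\gamma$ coming from the point foliation is defined on \emph{all} of $S$, so ${\bf z}\mapsto({\bf z},s_0|_{S-{\bf z}})$ is a section. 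Composing this section with the natural map $\mathrm{CBun}_n(S)\to\mathcal{M}_\nu(S,{\bf z})$ gives $\alpha$ directly. No spectral sequence, no monodromy analysis, no collapse argument is needed; one simply writes down the section.

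\medskip
\textbf{Where your argument breaks.} Your proposed mechanism for the collapse is that the fibration $\mathcal{F}\to P\to C_n(S)$ is the pullback, along the nullhomotopic composite $C_n(S)\to\BDiff(S,\partial S)$, of an $\mathcal{F}$-fibration over $\BDiff(S,\partial S)$. But there is no such fibration in sight: the map $\pi\beta\colon\BdDiff(M,\partial S)\to\BDiff(S,\partial S)$ has homotopy fiber $P$, not $\mathcal{F}$, so pulling it back along $C_n(S)\to\BDiff(S,\partial S)$ yields a $P$-bundle over $C_n(S)$, not $P$ itself. The $\mathcal{F}$-fibration that $P\to C_n(S)$ \emph{is} pulled back from is $\BdDiff(M,\partial S)\to\mathrm{BMod}(S,{\bf z})$, and the relevant map $\mathrm{B}\mathcal{P}\colon C_n(S)\to\mathrm{BMod}(S,{\bf z})$ is of course not nullhomotopic. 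For the same reason, your monodromy claim is unjustified: the action of $\mathrm{Br}_n(S)$ on $H^*(\mathcal{F})$ factors through $\mathrm{Mod}(S,{\bf z})$ (via $\mathcal{P}$, which is injective), not through $\mathrm{Mod}(S)$; a point-pushing class acts on $\mathcal{F}\simeq\overline{\BDiff(M,\partial S)}$ by conjugation by a diffeomorphism of $M$ that is \emph{not} isotopic to the identity on $M$, and there is no a priori reason this action is trivial on homology. Finally, even granting collapse, a Leray--Hirsch isomorphism for all abelian groups $A$ would require control over the $A$-module structure of $H^*(\mathcal{F};A)$ that you have not established.

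\medskip
In short, the paper bypasses all of this by producing a genuine section in the Mather--Thurston model; the one-line geometric reason is that the point foliation on $S$ restricts to every punctured subsurface at once. If you want to rescue your outline, the missing ingredient is precisely that section, and once you have it the spectral-sequence apparatus becomes unnecessary.
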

For $S=\bD^2$, we know that $\Diff(\bD^2,\partial \bD^2)$ is contractible (\cite{MR0112149}). Therefore the long exact sequence of the homotopy groups for the fibration $\Diff(\bD^2,{\bf z})\rightarrow \Diff(\bD^2, \partial \bD^2)\rightarrow C_{|{\bf z}|}(\bD^2)$ implies that $\mathcal{P}:\mathrm{Br}_{|{\bf z}|}(\bD^2)\to \mathrm{Mod}(\bD^2, {\bf z})$ is an isomorphism. Thus  \Cref{punctures} is a special case of  \Cref{surfaces} for $S=\bD^2$.

\subsection{Geometric meaning of the splitting} Using bordism theory, we can geometrically interpret the splitting  \Cref{punctures} as follows. Let a {\it $B_k$-object}\footnote{We borrowed this notation from \cite{fuks1974quillenization}.} be a triple $(M, N, \epsilon)$, where $M$ is an oriented compact smooth manifold, $N$ is an oriented smooth compact submanifold of the product $M \times \bR^k$ of dimension equal to the dimension of the manifold $M$, and $\epsilon$ is a trivialization of the normal bundle to $N$ in $M\times \bR^k$. If $N$ has a boundary, we assume that $\partial N\subset \partial M\times \bR^k$ and $\epsilon$ restricts to a trivialization of the normal bundle of $\partial N$ inside $\partial M\times \bR^k$. Thus we can define the boundary of the $B_k$-object $(M,N, \epsilon)$ to be $(\partial M, \partial N, \epsilon')$ where $\epsilon'$ is induced by the restriction of $\epsilon$. Given two $B_k$-objects $(M_1,N_1,\epsilon)$ and $(M_2,N_2,\epsilon_2)$ without boundary, we say they are bordant if there exists another $B_k$-object whose boundary is $(M_1\coprod -M_2,N_1\coprod -N_2,\epsilon_1\coprod \epsilon_2)$. We say $(M, N, \epsilon)$ is nonsingular if the restriction of the projection map $ M\times \bR^k \to M$ to $N$ is a regular map. 

For manifolds $M$ and $L$, the trivial bundle $M\times L\to M$ admits a horizontal foliation which is given by the leaves $M\times \{ x\}$ for all $x\in L$. Let $\mathcal{F}$ be a foliation on $M\times L$ transverse to the fiber of $M\times L\to M$. We say $\mathcal{F}$ is compactly supported if there exits a compact subset $K\subset L$ such that the restriction of $\mathcal{F}$ to $M\times (L\backslash K)$ coincides with the restriction of the horizontal foliation to $M\times (L\backslash K)$.
 \begin{thm}
 For $k=2$ and $k=3$, every nonsingular $B_k$-object $(M, N, \epsilon)$ is bordant to another nonsingular $B_k$-object $(M', N', \epsilon')$ where the bundle $p: M'\times \bR^k \backslash N'\to M'$ admits a compactly supported foliation that is transverse to the fibers of the projection $p$.
 \end{thm}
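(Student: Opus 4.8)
The plan is to reformulate the statement as the surjectivity of a natural map of oriented bordism groups, and then to deduce that surjectivity from \Cref{punctures} together with the plus-construction refinement recorded in the remark following it. Throughout fix $k\in\{2,3\}$ and write $F_{n}$ for the $k$-disk with $n$ punctures.

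The first step is to make precise the bordism classification of the objects in play. By a parametrised Pontryagin--Thom construction a $B_{k}$-object $(M,N,\epsilon)$ is the same datum as a map of $M$ into a Thom-type space, under which $B_{k}$-bordism becomes oriented bordism. Nonsingularity forces $N\to M$ to be a proper submersion of relative dimension $0$, i.e.\ a finite covering; so a nonsingular $B_{k}$-object over $M$ is a covering $N\to M$ of some degree $n$ together with a fibrewise embedding into $M\times\bR^{k}$ and a trivialisation of its normal bundle, and such data are classified by a configuration-type space $\mathcal{C}^{(k)}_{n}$. Passing to the complement $M\times\bR^{k}\setminus N$ identifies $\mathcal{C}^{(k)}_{n}$ with $\BDiff(F_{n},\partial F_{n})$, the classifying space of the topological group of diffeomorphisms of the $n$-punctured $k$-disk that are standard near its boundary and its punctures; here one uses that $\Diff(\bD^{k},\partial\bD^{k})$ is contractible (Smale for $k=2$, Hatcher for $k=3$), which is the only place the hypothesis $k\le 3$ enters, and the normal trivialisation $\epsilon$ provides exactly the decoration making the two descriptions agree. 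Hence nonsingular $B_{k}$-objects up to $B_{k}$-bordism are classified by $\bigoplus_{n}\Omega^{SO}_{*}\bigl(\BDiff(F_{n},\partial F_{n})\bigr)$.

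For the second step, a foliation on $p\colon M\times\bR^{k}\setminus N\to M$ transverse to the fibres $F_{n}$ and compactly supported --- agreeing with the ambient horizontal foliation of $M\times\bR^{k}$ outside a compact set, which near $N$ is read off through $\epsilon$ --- is, by the standard correspondence between transverse foliations and flat bundles, precisely a flat $\Diff(F_{n},\partial F_{n})$-structure on $p$, i.e.\ a lift of its classifying map along $\BdDiff(F_{n},\partial F_{n})\to\BDiff(F_{n},\partial F_{n})$. So nonsingular $B_{k}$-objects whose complement carries such a foliation are classified up to bordism by $\bigoplus_{n}\Omega^{SO}_{*}\bigl(\BdDiff(F_{n},\partial F_{n})\bigr)$, and forgetting the foliation is the map induced by $\BdDiff(F_{n},\partial F_{n})\to\BDiff(F_{n},\partial F_{n})$. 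The theorem thus amounts to the surjectivity, for every $n$, of $\Omega^{SO}_{*}\bigl(\BdDiff(F_{n},\partial F_{n})\bigr)\to\Omega^{SO}_{*}\bigl(\BDiff(F_{n},\partial F_{n})\bigr)$.

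Finally, $\Omega^{SO}_{*}$ is a generalised homology theory and is therefore unchanged by Quillen's plus construction, so it suffices to split the above map after plus construction. For $k=2$ and $n\ge 5$ this is precisely the section of $\BdDiff(\bD^{2}-{\bf z},\partial\bD^{2})^{+}\to\mathrm{BMod}(\bD^{2},{\bf z})^{+}\simeq\BDiff(\bD^{2}-{\bf z},\partial\bD^{2})^{+}$ furnished by the remark after \Cref{punctures}; the remaining finitely many cases $n\le 4$, and the case $k=3$ (via the $\bD^{3}$-analogue of \Cref{punctures}), are treated the same way. Composing the classifying map $M\to\BDiff(F_{n},\partial F_{n})$ of a given nonsingular $B_{k}$-object with such a section produces a nonsingular $B_{k}$-object with foliated complement that is $B_{k}$-bordant to the original, which is the claim. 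The main obstacle lies in the first two steps: setting up the parametrised Pontryagin--Thom correspondence carefully enough over manifolds with corners, carrying the normal trivialisation $\epsilon$ coherently, pinning down the precise diffeomorphism group so that $\mathcal{C}^{(k)}_{n}$ really matches the $\BDiff$ appearing in \Cref{punctures}, and checking that the compact-support condition on the foliation side translates into the boundary condition defining $\BdDiff(F_{n},\partial F_{n})$; once these identifications are in place, the deduction above is formal.
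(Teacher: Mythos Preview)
Your proposal is essentially correct and follows the same route as the paper: both reduce the bordism statement to the splitting of \Cref{punctures} (and its $k=3$ analogue in \Cref{a}) by identifying nonsingular $B_k$-objects of degree $n$ with maps into $C_n(\bD^k)\simeq\BDiff(\bD^k-{\bf z},\partial\bD^k)$ and the foliated ones with maps into $\BdDiff(\bD^k-{\bf z},\partial\bD^k)$, and then invoking the section on oriented bordism.

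The one packaging difference worth noting is your detour through the plus construction. The remark after \Cref{punctures} only supplies a section of $\BdDiff^+\to\BDiff^+$ for $|{\bf z}|\ge 5$, and no plus-construction statement is recorded for $k=3$, so your phrase ``treated the same way'' for $n\le 4$ and for $k=3$ is doing real work. The paper avoids this entirely: it works directly with the homology-equivalent model $\mathrm{CBun}_{|{\bf z}|}(\bD^k)$ (equivalently $\mathcal{M}_\nu$), for which \Cref{section} provides an honest space-level section $C_{|{\bf z}|}(\bD^k)\to\mathrm{CBun}_{|{\bf z}|}(\bD^k)$ valid for \emph{all} $|{\bf z}|$ and both $k=2,3$. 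That section, combined with Mather--Thurston, gives the $\mathrm{MSO}_*$-splitting uniformly and makes the plus construction unnecessary. Since the proof of \Cref{punctures} is precisely this $\mathrm{CBun}$ section plus Mather--Thurston, you are ultimately using the same ingredients; the paper's route just unpacks them more directly and sidesteps the small-$n$ case distinction.
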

 \subsection{Inducing up the map $\mathrm{BBr}_{2g+2}(\bD^2)\to \mathrm{BMod}(\Sigma_{g,2})$} 
  There are many ways to embed the braid group into the mapping class group of a surface. We are interested in the embedding induced by the geometric description given by Tillmann and Segal in \cite{segal2007mapping}. To recall the map, let ${C}_{2g+2}(\bD^2)$ be the configuration of  $2g+2$ unordered  points in interior of the unit disk and let $\mathcal{M}_{g,2}$ be the moduli
space of connected Riemann surfaces of genus $g$ with two ordered and parametrized boundary components. There is a map
\[
\phi: {C}_{2g+2}(\bD^2)\to \mathcal{M}_{g,2}
\]
 which sends $2g+2$ points ${\bf z}= \{z_1,z_2,\dots, z_{2g+2}\}$ to the Riemann surface $\Sigma_{{\bf z}}$ given by 
 \[
 f_{{\bf z}}(z)^2 =\prod_{i=1}^{2g+2} (z-z_i).
 \]
This surface which has genus $g$ and $2$ boundary components is a branch cover over the unit disk with ${\bf z}$ as the set of branched points. We consider the map that is induced on the fundamental groups
 \[
 \psi: \mathrm{Br}_{2g+2}(\bD^2)\to \mathrm{Mod}(\Sigma_{g,2}).
 \]
 Birman and Hilden \cite{birman1973isotopies} proved that this map is injective. The main theorem in \cite{song2007braids} and \cite{segal2007mapping} is that $\psi$ induces the trivial map on the stable homology.
 
 Motivated by the Lie theoretic version of the Margulis Superrigidity, which asserts that homomorphisms between lattices (virtually) extend to homomorphisms of the ambient groups, Aramayona and Suoto in \cite{aramayona2012rigidity} asked if the same is true for group homomorphisms between mapping class groups. In our case, the question becomes: 
 \begin{quest}\label{quest}
 Is it true that the group homomorphism $\psi$ is (virtually) induced by a homomorphism $\Psi$ between diffeomorphism groups? In other words, does there exist  a homomorphism $\Psi$ that makes the following diagram commute?
  \[
 \begin{tikzpicture}[node distance=4.6cm, auto]
  \node (A) {$ \dDiff(\bD^2, (2g+2)\text{ marked points})$};
  \node (B) [right of=A] {$\dDiff(\Sigma_{g,2},\partial\Sigma_{g,2})$};
  \node (C) [below of= A, node distance=1.8cm ] {$\mathrm{Br}_{2g+2}(\bD^2)$};  
  \node (D) [right of=C] {$\mathrm{Mod}(\Sigma_{g,2}).$};
   \draw [->] (A) to node {$\Psi$}(B);
  \draw [->] (C) to node {$\psi$}(D);
  \draw [->] (A) to node {$$}(C);
  \draw [->] (B) to node {$$} (D);
\end{tikzpicture}
\]
 \end{quest}

 As we shall see in \Cref{induceup}, one can use covering space theory to  lift $\psi$ to a group homomorphism between $\dDiff(\bD^2, (2g+2)\text{ marked points})$ and the homeomorphism group $\mathrm{Homeo}^{\delta}(\Sigma_{g,2},\partial\Sigma_{g,2})$ but it is not hard to see that such a lift cannot be made differentiable at the ramification points in $\Sigma_{g,2}$. 
 

We show that there is no homological obstruction for $\Psi$ to exist in the following sense:
 \begin{thm}
There exists a map $\Phi$ which makes the following diagram homotopy commutative
   \[
 \begin{tikzpicture}[node distance=5cm, auto]
  \node (A) {$ \BdDiff(\bD^2, (2g+2)\text{ marked points})$};
  \node (B) [right of=A] {$Y$};
  \node (C) [below of= A, node distance=1.8cm ] {$\mathrm{BBr}_{2g+2}(\bD^2)$};  
  \node (D) [right of=C] {$\mathrm{BMod}(\Sigma_{g,2}).$};
   \draw [->] (A) to node {$\Phi$}(B);
  \draw [->] (C) to node {$\phi$}(D);
  \draw [->] (A) to node {$$}(C);
  \draw [->] (B) to node {$$} (D);
\end{tikzpicture}
\]
where $Y$ is a space over $\mathrm{BMod}(\Sigma_{g,2})$ that is only homology equivalent to the classifying space $\BdDiff(\Sigma_{g,2},\partial\Sigma_{g,2})$.
 \end{thm}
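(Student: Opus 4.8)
The plan is to obtain $\Phi$ from the double–branched–cover construction applied to \emph{flat} bundles, and then to recognise its target as a space with the same homology as $\BdDiff(\Sigma_{g,2},\partial\Sigma_{g,2})$. First I would make the lift discussed just above precise on classifying spaces. The space $\BdDiff(\bD^2,(2g+2)\text{ marked points})$ carries a universal flat bundle: a foliated $\bD^2$-bundle that is fiberwise the identity near the boundary, together with a flat $(2g+2)$-sheeted sub-cover recording the marked locus. Taking the fiberwise double cover branched along this locus yields a $\Sigma_{g,2}$-bundle whose total space is a smooth manifold carrying a foliation transverse to the fibers; away from the ramification sub-bundle the foliation is pulled back from the base along a smooth covering map, hence smooth, while along the ramification locus it is merely continuous --- exactly the non-differentiability at the ramification points pointed out above. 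Writing $\mathcal{G}$ for the group of homeomorphisms of $\Sigma_{g,2}$ that are the identity near $\partial\Sigma_{g,2}$, preserve the set ${\bf w}$ of the $2g+2$ ramification points, and restrict to diffeomorphisms of $\Sigma_{g,2}\setminus{\bf w}$, the construction above is classified by a map $\Phi\colon \BdDiff(\bD^2,(2g+2)\text{ marked points})\to \mathrm{B}\mathcal{G}^{\delta}=:Y$, induced by the homomorphism sending a diffeomorphism of the marked disk to its lift through the branched cover.

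Next I would identify the map $Y\to\mathrm{BMod}(\Sigma_{g,2})$ and check commutativity. Forgetting the foliation, the marking and the partial smoothness remembers only the underlying topological $\Sigma_{g,2}$-bundle, giving $q\colon Y\to\mathrm{BHomeo}(\Sigma_{g,2},\partial\Sigma_{g,2})\simeq\mathrm{BMod}(\Sigma_{g,2})$ (for surfaces, homeomorphism and diffeomorphism groups are homotopy equivalent, with contractible path-components rel boundary). The composite $q\circ\Phi$ classifies the underlying topological $\Sigma_{g,2}$-bundle of the branched cover, which is pulled back from $\mathcal{M}_{g,2}\simeq\mathrm{BMod}(\Sigma_{g,2})$ along $\BdDiff(\bD^2,{\bf z})\to\mathrm{BDiff}(\bD^2,{\bf z})\simeq C_{2g+2}(\bD^2)\simeq\mathrm{BBr}_{2g+2}(\bD^2)\xrightarrow{\ \phi\ }\mathcal{M}_{g,2}$; this is precisely the bottom-left composite of the square, so the diagram commutes up to homotopy.

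The heart of the proof is the claim that $Y=\mathrm{B}\mathcal{G}^{\delta}$ is homology equivalent --- but visibly not homotopy equivalent --- to $\BdDiff(\Sigma_{g,2},\partial\Sigma_{g,2})$, the two having the non-isomorphic discrete fundamental groups $\mathcal{G}$ and $\dDiff(\Sigma_{g,2},\partial\Sigma_{g,2})$. I would compare the two through the group of diffeomorphisms of $\Sigma_{g,2}$ preserving ${\bf w}$, and argue that (a) enlarging that group to $\mathcal{G}$ --- i.e.\ allowing non-smoothness at the ramification points --- and (b) forgetting that the ramification points are marked each induce isomorphisms on the homology of classifying spaces. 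Both are of Mather--Thurston/fragmentation type: the relative terms are supported in arbitrarily small neighborhoods of ${\bf w}$, so they reduce to local statements about germs at a point of $\bR^2$, the decisive one being that the classifying space of germs of diffeomorphisms fixing $0$ maps by a homology isomorphism to that of germs of homeomorphisms fixing $0$ that are smooth off $0$ --- the one-puncture incarnation of the splitting/acyclicity mechanism already used to prove \Cref{punctures}. An alternative route runs the comparison through the $\iota$-equivariant diffeomorphism group of $\Sigma_{g,2}$ together with Birman--Hilden, after identifying $\mathcal{G}$ with the partially-smooth homeomorphism group of the disk via the branched quotient.

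The main obstacle is precisely this last step --- showing that the one genuine defect of the branched-cover lift, non-differentiability along the ramification locus together with the bookkeeping of marked points it forces, is invisible to homology. The first two steps are formal manipulations with flat bundles and standard surface topology; the substance of the theorem is this homological rigidity, and I expect it to follow by feeding the techniques behind \Cref{punctures} into a Mather--Thurston argument for $\mathcal{G}$.
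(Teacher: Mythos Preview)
Your approach differs substantially from the paper's, and the step you flag as ``the heart of the proof'' is a genuine gap rather than a routine exercise.

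The paper does not take $Y$ to be the classifying space of a group of partially smooth homeomorphisms. Instead it sets $Y=\mathrm{Bun}_{\partial \Sigma_{g,2}}(T\Sigma_{g,2},\nu^*(\gamma))\hcoker\Diff(\Sigma_{g,2},\partial\Sigma_{g,2})$, for which the homology equivalence with $\BdDiff(\Sigma_{g,2},\partial\Sigma_{g,2})$ is \emph{already} supplied by Mather--Thurston. The map $\Phi$ is then essentially a one-line construction: since elements of $\dDiff(\bD^2,{\bf z})$ are smooth across the marked points, the image of $\BdDiff(\bD^2,{\bf z})$ in $\mathrm{CBun}_{|{\bf z}|}(\bD^2)$ consists of pairs $({\bf z},\tilde f)$ with $\tilde f$ a bundle map defined on all of $\bD^2$; composing with the branched cover $\pi_{\bf z}\colon\Sigma_{\bf z}\to\bD^2$ yields a bundle map $\tilde f\circ\pi_{\bf z}$ on the whole surface, and one sets $\Phi({\bf z},\tilde f)=(\Sigma_{\bf z},\tilde f\circ\pi_{\bf z})$. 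The key observation is that lifts of the tangential structure to $\mathrm{BS}\Gamma_2$, unlike diffeomorphisms, pull back through branched covers with no loss of regularity at the ramification points, so the non-differentiability obstacle you correctly identify simply never arises in this model.

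Your route, by contrast, pushes all the difficulty into proving that $\mathrm{B}\mathcal{G}^{\delta}$ has the homology of $\BdDiff(\Sigma_{g,2},\partial\Sigma_{g,2})$. The local statement you need---that germs of diffeomorphisms fixing $0$ and germs of homeomorphisms fixing $0$ that are smooth off $0$ have the same group homology---does not follow from \Cref{punctures} (whose one-puncture case is vacuous), and the paper itself remarks that the Mather--Thurston analogue for diffeomorphisms with marked points is not known. So while your construction of $\Phi$ into $\mathrm{B}\mathcal{G}^{\delta}$ is fine, your choice of $Y$ leaves you owing an unproved theorem of independent difficulty, whereas the paper's choice of $Y$ reduces everything to an established homology equivalence plus a transparent pullback of bundle maps.
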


\begin{rem}
Kathryn Mann (\cite{Katie}) recently found a construction  for the map $\Psi$ which solves the problem \ref{quest}.
\end{rem}
 \begin{rem}
Unlike the main theorem in \cite{song2007braids} and \cite{segal2007mapping} which says that $\phi$ induces a trivial map in the stable homology, we show that $\Phi$ induces a nontrivial map
 \[
  H_3( \dDiff(\bD^2, (2g+2)\text{ marked points});\bZ)\to H_3(\dDiff(\Sigma_{g,2},\partial\Sigma_{g,2});\bZ)
 \]
between third homologies, using Godbillon-Vey classes.
 \end{rem}
 \section*{Acknowledgments} I would like to thank S\o ren Galatius for his support and helpful discussions during this project. I would like to thank Kathryn Mann for introducing me to the realization problems and her interest in \Cref{quest}. I am indebted to Alexander Kupers for reading the first draft of this work. I also like to thank Jeremy Miller, Johannes Ebert, Ricardo Andrade and Jonathan Bowden for helpful discussions. Bena Tshishiku and Nick Salter provided me with an earlier draft of their paper for which I am very grateful. This project was partially supported by NSF grant DMS-1405001. Finally I would like thank the referee for his/her careful reading and many helpful suggestions.

\section{Splitting of the group $H^*( \dDiff(\bD^2-{\bf z},\partial \bD^2);\bZ)$}\label{sec:2}
In this section, we define the notion of {\it configuration bundle maps} inspired by the work of Ellenberg, Venkatesh and Westerland in \cite{ellenberg2012homological} to prove \Cref{punctures}. Using configuration bundle maps, we give a model for the plus construction of $\BdDiff( \bD^2-{\bf z},\partial \bD^2)$ that also maps to $\mathrm{BMod}(\bD^2, {\bf z})$. Given this model, we then find a space level section which leads to a section between (co)homology groups. 

\subsection{Recollection from foliation theory}\label{foliation} Let us first briefly recall Mather-Thurston theory (see \cite[Section 1.2.1]{nariman2015stable} or \cite[Section 5.1]{nariman2014homologicalstability} for more details). 

Let $\mathrm{S}\Gamma_n$ denote the topological category whose objects are points in $\bR^n$ with the usual topology and whose morphisms are germs of orientation preserving diffeomorphisms of $\bR^n$. Given an open cover $\mathcal{U}=\{ U_i\hookrightarrow X\}$ on a topological space $X$, we can define the {\it \v{C}ech groupoid} $X_{\mathcal{U}}$ as follows. The space of objects of $X_{\mathcal{U}}$ is given by the disjoint union $\coprod_i U_i$ and the space of morphisms is given by the disjoint union $\coprod_{i,j} U_i\cap U_j$. An $\mathrm{S}\Gamma_n$-cocycle  on a topological space $X$ is given by a covering $\mathcal{U}$ of $X$ and a functor from the groupoid $X_{\mathcal{U}}$ to the groupoid $\mathrm{S}\Gamma_n$. Two $\mathrm{S}\Gamma_n$-cocycles, $F_1:X_{\mathcal{U}}\to \mathrm{S}\Gamma_n$ and $F_2:X_{\mathcal{V}}\to \mathrm{S}\Gamma_n$ are equivalent if there exists a functor $F:X_{\mathcal{U}\cup \mathcal{V}}\to \mathrm{S}\Gamma_n$ such that $F|_{X_{\mathcal{U}}}=F_1$ and $F|_{X_{\mathcal{V}}}=F_2$.

A $\mathrm{S}\Gamma_n$-structure on $X$ is an equivalent class of $\mathrm{S}\Gamma_n$-cocycles. Two $\mathrm{S}\Gamma_n$-structures $F_1$ and $F_2$ are {\it concordant} if there exists an $\mathrm{S}\Gamma_n$-structure $F$ on $X\times I$ so that $F|_{X\times\{0\}}=F_1$ and $F|_{X\times\{1\}}=F_2$.  Concordance class of  $\mathrm{S}\Gamma_n$-structures on $X$ are in bijection with homotopy classes of maps from $X$ to the classifying space $\mathrm{BS}\Gamma_n$. In particular, on a manifold $M$ any codimension $n$ foliation $\mathcal{F}$ whose normal bundle is oriented, gives rise to an $\mathrm{S}\Gamma_n$-structure on $M$ and therefore the foliation $\mathcal{F}$ induces a map $M\to \mathrm{BS}\Gamma_n$ well defined up to homotopy. 

We can consider the topological group $\mathrm{GL}_n(\bR)^+$ of invertible real $n$ by $n$ matrices with positive determinants as a topological category with one object. There is a functor from $\mathrm{S}\Gamma_n$ to $\mathrm{GL}_n(\bR)^+$ that sends every morphism in $\mathrm{S}\Gamma_n$ to its derivative. This functor induces a map between classifying spaces  $\nu: \mathrm{BS}\Gamma_n\to \mathrm{BGL}_n(\bR)^+$. For a smooth $n$-manifold $M$ possibly with boundary, the foliation by points gives an $\mathrm{S}\Gamma_n$-structure whose normal bundle is the tangent bundle of the manifold $M$. By a general theory of Haefilger \cite[Section 3]{haefliger1971homotopy}, this $\mathrm{S}\Gamma_n$-structure induces a commutative diagram up to homotopy 
 \begin{equation}\label{eq:11}
 \begin{gathered}
 \begin{tikzpicture}[node distance=1.8cm, auto]
  \node (A) {$M$};
  \node (B) [right of=A] {$\mathrm{BGL}_{n}(\bR)^+,$};
  \node (C) [above of= B ] {$\mathrm{BS}\Gamma_{n}$};  
   \draw [->] (A) to node {$\tau$}(B);
  \draw [->] (C) to node {$\nu$}(B);
  \draw [->] (A) to node {$s_0$}(C);
\end{tikzpicture}
\end{gathered}
 \end{equation}
where $\tau$ is the classifying map for the tangent bundle $TM$. Therefore $s_0$ lifts the orientation structure on $TM$ to a tangential structure given by the map $\nu$. Let $\gamma$ be the tautological bundle over $\mathrm{BGL}_{n}(\bR)^+$. Fixing an isomorphism between $TM$ and $\tau^*(\gamma)$, the map $s_0$ induces a bundle map from $TM$ and $\nu^*(\gamma)$. Let $\text{Bun}_{\partial M}(TM,\nu^*(\gamma))$ be the space of bundle maps from $TM$ to $\nu^*(\gamma)$ that are equal to the map induced by $s_0$ on a germ of  a collar of the boundary $\partial M$. The topology on the bundle maps is given by the compact-open topology. Note that the topological group $\Diff(M, \partial M)$ acts on the space $\text{Bun}_{\partial M}(TM,\nu^*(\gamma))$  by precomposing a bundle map with the differential of a diffeomorphism\footnote{For a topological group $G$ acting on a topological space $X$, the homotopy quotient is denoted by $X\hcoker G$ and is given by $X\times_G \mathrm{E}G$ where $\mathrm{E}G$ is a contractible space on which $G$ acts freely.}.

In \cite[Section 1.2.1]{nariman2015stable}, we explained that Mather-Thurston's theorem (\cite{mather2011homology}) implies that there exists a homotopy commutative diagram
\[
 \begin{tikzpicture}[node distance=2.5cm, auto]
  \node (A) {$\BdDiff(M,\partial M)$};
  \node (B) [right of=A, below of=A, node distance=1.7cm]{$\BDiff(M,\partial M),$};
  \node (C) [right of=A, node distance=5cm]{$\text{Bun}_{\partial}(TM,\nu^*(\gamma))\hcoker \Diff(M,\partial M)$};
  \draw [->] (A) to node {$f_M$} (C);
  \draw [<-] (B) to node {$$} (C);
  \draw [->] (A) to node {$$} (B);
 \end{tikzpicture}
\]
where the horizontal map induces a homology isomorphism. In fact we need a relative version of Mather-Thurston's theorem (see  \cite{mcduff1983local} and \cite{mcduff1980homology}) which implies that for a set of points ${\bf z}$ in the interior of $M$, we have a map
\[
f_{M-{\bf z}}:\BdDiff(M-{\bf z},\partial M)\to \text{Bun}_{\partial M}(T(M-{\bf z}),\nu^*(\gamma))\hcoker \Diff(M-{\bf z},\partial M),
\]
that is over $\BDiff(M-{\bf z},\partial M)$ and induces a homology isomorphism. For brevity, we denote the target of the map $f_{M-{\bf z}}$ by $\mathcal{M}_{\nu}(M-{\bf z})$.
\subsection{Configuration bundle maps} For a smooth manifold $M$ with boundary, let $\text{int}(M)$ denote the interior of $M$. Let $[n]$ be the discrete space $\{1,2,\cdots, n\}$. The {\it ordered configuration space} of $n$ points in $M$ is the space 
\[
F_n(M):=\text{Emb}([n],\text{int}(M)).
\]
The symmetric group of $n$ letters $S_n$ acts freely on $F_n(M)$. The space of {\it unordered configuration space} of $n$ points is the quotient space
\[
C_n(M):=F_n(M)/S_n.
\]
Recall from \Cref{braids} that for a set of $n$ points ${\bf z}$ in the interior of $M$ the group $\Diff(M,{\bf z})$ denotes the topological group of $C^{\infty}$-diffeomorphisms of $M$ that fix ${\bf z}$ setwise and are the identity near the boundary $\partial M$.  The space $C_n(M)$ sits in a fibration sequence
\begin{equation}\label{fibration}
 \Diff(M,{\bf z})\to\Diff(M,\partial M)\to C_n(M),
\end{equation}
where the second map is given by the action of $\Diff(M,\partial M)$ on the set ${\bf z}$.
\begin{defn} We define the   {\it configuration bundle maps} $\mathrm{CBun}_k(M)$ to be the space of pairs $$\{({\bf x}, f)|\  { \bf x}\in C_k(M), f\in \mathrm{Bun}_{\partial M}(T(M-{\bf x}),\nu^*(\gamma))\}.$$ To define the topology on these pairs, let ${\bf z}\in \mathrm{C}_k(M)$ be  a fixed configuration of $k$ points in the interior of the manifold $M$. Consider the space
\[
\Diff(M,\partial M)\times_{\Diff(M,{\bf z})}\mathrm{Bun}_{\partial M}(T(M-{\bf z}),\nu^*(\gamma)),
\]
where  the action of $\Diff(M,{\bf z})$ on $\mathrm{Bun}_{\partial M}(T(M-{\bf z}),\nu^*(\gamma))$ is induced by the natural action of $\Diff(M-{\bf z},\partial M)$ on $\mathrm{Bun}_{\partial M}(T(M-{\bf z}),\nu^*(\gamma))$. There is a bijection between this space and  $\mathrm{CBun}_k(M)$ by sending 
\[
(\phi, g)\in \Diff(M,\partial M)\times_{\Diff(M,{\bf z})}\mathrm{Bun}_{\partial M}(T(M-{\bf z}),\nu^*(\gamma)),
\]
to $(\phi({\bf z}), g\circ D(\phi^{-1}))\in \mathrm{CBun}_k(M)$ where $D(\phi^{-1})$ is the derivative of $\phi^{-1}$. This bijection induces a topology on $\mathrm{CBun}_k(M)$ and it is independent of the choice of the fixed ${\bf z}\in C_k(M)$.
 \end{defn}
 The projection map
 \[
\pi: \mathrm{CBun}_k(M)\to C_k(M)
 \]
 that sends the pair $({\bf z},f)$ to the configuration of points ${\bf z}$ is a fibration and the fiber over ${\bf z}$ is space $\mathrm{Bun}_{\partial M}(T(M-{\bf z}),\nu^*(\gamma))$.
 \begin{lem}\label{section}
 The projection $\pi$ has a section.
 \end{lem}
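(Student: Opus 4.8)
The plan is to construct a section of $\pi\colon \mathrm{CBun}_k(M)\to C_k(M)$ by exhibiting, for each configuration ${\bf x}\in C_k(M)$, a bundle map $T(M-{\bf x})\to \nu^*(\gamma)$ that depends continuously on ${\bf x}$. The key observation is that the total space $\mathrm{CBun}_k(M)$ was defined (via the displayed homeomorphism) as a Borel construction $\Diff(M,\partial M)\times_{\Diff(M,{\bf z})}\mathrm{Bun}_{\partial M}(T(M-{\bf z}),\nu^*(\gamma))$ for a fixed reference configuration ${\bf z}$; so $\pi$ is precisely the bundle over $C_k(M)=\Diff(M,\partial M)/\Diff(M,{\bf z})$ associated to the principal $\Diff(M,{\bf z})$-bundle $\Diff(M,\partial M)\to C_k(M)$ of the fibration \eqref{fibration} and to the $\Diff(M,{\bf z})$-space $\mathrm{Bun}_{\partial M}(T(M-{\bf z}),\nu^*(\gamma))$. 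A section therefore amounts to a $\Diff(M,{\bf z})$-equivariant map from $\Diff(M,\partial M)$ to that space of bundle maps; in fact it suffices to produce a single $\Diff(M,{\bf z})$-fixed point in $\mathrm{Bun}_{\partial M}(T(M-{\bf z}),\nu^*(\gamma))$, because such a fixed point defines the constant equivariant map.

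The first step is thus to recall the content of the diagram \eqref{eq:11}: the Haefliger map $s_0\colon M\to \mathrm{BS}\Gamma_n$ classifying the foliation by points together with the chosen isomorphism $TM\cong\tau^*(\gamma)$ gives a \emph{canonical} bundle map $\sigma_0\colon TM\to\nu^*(\gamma)$, which is moreover natural: for any diffeomorphism $\phi$ of $M$, the bundle map $\sigma_0\circ D\phi$ is again the canonical one, since the foliation by points and the tangential structure $\nu$ are diffeomorphism-invariant. Restricting $\sigma_0$ to $M-{\bf z}$ and to the germ of a collar of $\partial M$ agreeing with $s_0$ gives an element $\sigma_0|_{M-{\bf z}}\in \mathrm{Bun}_{\partial M}(T(M-{\bf z}),\nu^*(\gamma))$. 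The second step is to check that this element is fixed by the $\Diff(M,{\bf z})$-action: an element $\phi\in\Diff(M,{\bf z})$ acts by precomposition with $D(\phi^{-1})$ (after restricting $\phi$ to a diffeomorphism of $M-{\bf z}$), and by the naturality just noted $\sigma_0|_{M-{\bf z}}\circ D(\phi^{-1})=\sigma_0|_{M-{\bf z}}$. The final step is bookkeeping: transport this fixed point through the defining homeomorphism to get the continuous section $s\colon C_k(M)\to\mathrm{CBun}_k(M)$, $s({\bf x})=({\bf x},\sigma_0|_{M-{\bf x}})$, and verify $\pi\circ s=\mathrm{id}$, which is immediate from the formula for $\pi$.

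I expect the only genuine subtlety to be the compatibility at the boundary collar and the continuity of $\sigma_0|_{M-{\bf x}}$ in ${\bf x}$ — that is, checking that the fixed bundle map really does live in the subspace $\mathrm{Bun}_{\partial M}$ cut out by the collar condition and that the assignment is continuous in the compact-open topology. Both reduce to the functoriality of the Haefliger construction in \eqref{eq:11} and to the fact that the $\Diff(M,{\bf z})$-action used to topologize $\mathrm{CBun}_k(M)$ is the restriction of the $\Diff(M-{\bf z},\partial M)$-action under which $\sigma_0|_{M-{\bf z}}$ is manifestly equivariant; there is no hard geometric input beyond what is already recalled in \Cref{foliation}.
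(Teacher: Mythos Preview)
Your proposal arrives at exactly the section the paper writes down, namely ${\bf x}\mapsto({\bf x},s_0|_{M-{\bf x}})$, but the justification you give for continuity contains a genuine error. You assert that $\sigma_0\circ D\phi=\sigma_0$ for every diffeomorphism $\phi$, so that $\sigma_0|_{M-{\bf z}}$ is a $\Diff(M,{\bf z})$-fixed point and the constant map is the desired equivariant map. This is false. The classifying map $s_0\colon M\to\mathrm{BS}\Gamma_n$ is built from a choice of charts (the objects of $\mathrm{S}\Gamma_n$ are points of $\bR^n$, not of $M$), and $s_0\circ\phi$ corresponds to the pulled-back atlas. The two maps classify the same $\mathrm{S}\Gamma_n$-structure and are therefore \emph{homotopic}, but they are not equal as maps; hence $\sigma_0\circ D\phi\neq\sigma_0$ on the nose. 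Indeed, were $\sigma_0$ literally $\Diff(M,\partial M)$-invariant, the underlying map $s_0$ would be constant on each orbit in $\mathrm{int}(M)$, hence constant, which it is not.

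The repair is minor and still fits your framework. The correct $\Diff(M,{\bf z})$-equivariant map $\Diff(M,\partial M)\to\mathrm{Bun}_{\partial M}(T(M-{\bf z}),\nu^*(\gamma))$ is not the constant one but $\phi\mapsto (s_0\circ D\phi)|_{T(M-{\bf z})}$: for $\psi\in\Diff(M,{\bf z})$ one has $s_0\circ D(\phi\psi)=(s_0\circ D\phi)\circ D\psi=\psi^{-1}\cdot(s_0\circ D\phi)$ under the action $\psi\cdot g=g\circ D(\psi^{-1})$ read off from the defining bijection, and continuity in $\phi$ is immediate. Passing to the quotient and applying that bijection sends $[\phi,s_0\circ D\phi]$ to $(\phi({\bf z}),\,s_0\circ D\phi\circ D(\phi^{-1}))=(\phi({\bf z}),\,s_0|_{M-\phi({\bf z})})$, which is precisely the section you (and the paper) wrote down. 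The paper's proof simply records this formula and leaves the continuity check implicit.
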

 \begin{proof}
 Recall in the diagram \ref{eq:11}, the map $s_0$ induces a bundle map that lives in $\mathrm{Bun}_{\partial M}(T(M),\nu^*(\gamma))$. Let us denote this bundle map by $s_0$ with abuse of notation. We can restrict the bundle map $s_0$ to $M-{\bf z}$ for every configuration of points ${\bf z}\in C_k(M)$ to obtain a bundle map in $\mathrm{Bun}_{\partial M}(T(M-{\bf z}),\nu^*(\gamma))$. Therefore, the map that sends ${\bf z}$ to the pair $({\bf z}, {s_0}|_{M-{\bf z}})$ gives a section for $\pi$.
 \end{proof}
 As was pointed out in the introduction, \Cref{punctures} is a consequence  of \Cref{surfaces} for $S=\bD^2$.  Hence we prove the latter using configuration bundle maps:
 \begin{proof}[Proof of \Cref{surfaces}] Recall that for a $S$ and ${\bf z}\in C_k(S)$, the group $\Diff(S,{\bf z})$ is the group of $C^{\infty}$-diffeomorphisms of $S$ whose supports are away from the boundary $\partial S$ and fix the points ${\bf z}$ as a set (they might permute points). It is a consequence of smoothing theory (\cite{MR0356103}) in dimension $2$ that the inclusion $\Diff(S,{\bf z})\hookrightarrow \Diff(S-{\bf z},\partial S)$ is a homotopy equivalence. Therefore the induced map between the following homotopy quotients of bundle maps
 \[
 \mathrm{Bun}_{\partial S}(T(S-{\bf z}),\nu^*(\gamma))\hcoker \Diff(S,{\bf z})\to  \mathcal{M}_{\nu}(S-{\bf z}),
 \]
 is in fact a weak equivalence. For brevity, in above weak equivalence we denote the first homotopy quotient by $\mathcal{M}_{\nu}(S,{\bf z})$. Using Mather-Thurston's theorem as we explained in \Cref{foliation}, we have a homotopy  commutative diagram 
 \begin{equation}\label{diagram1}
 \begin{tikzpicture}[node distance=3.9cm, auto]
  \node (A)  {$\BdDiff(S-{\bf z},\partial S)$};
  \node (C) [right of= A] {$\mathcal{M}_{\nu}(S-{\bf z})$};  
      \node (B) [below of= A, node distance=1.9 cm]{$\BDiff(S-{\bf z},\partial S)$};
    \node (D) [right of= C] {$\mathcal{M}_{\nu}(S,{\bf z})$};  
      \node (E) [right of= B] {$\mathrm{BMod}(S,{\bf z})$};
  \node (F) [right of= E] {$\mathrm{BBr}_{|{\bf z}|}(S).$};
   \draw [->] (B) to node {$$}(E);
   \draw [<-] (E) to node {$$}(F);
   \draw [<-] (B) to node {$$}(A);
  \draw [->] (C) to node {$$}(B);
  \draw [->] (A) to node {$H^*-\text{iso}$}(C);
    \draw [<-] (C) to node {$\simeq$}(D);
  \draw [->] (D) to node {$$}(E);
    \draw [->, dotted] (F) to node {$\alpha$}(D);
\end{tikzpicture}
 \end{equation}
Therefore to finish the proof, it suffices to prove that the dotted arrow $\alpha$ exists that makes the right most triangle commute up to homotopy. 

It is well known that for a surface $S$ that either has  boundary or is a closed orientable surface whose genus is positive, the configuration space $C_{|{\bf z}|}(S)$ is an aspherical space. Since its fundamental group is by definition $\mathrm{Br}_{|{\bf z}|}(S)$, the configuration space $C_{|{\bf z}|}(S)$ is a model for $\mathrm{BBr}_{|{\bf z}|}(S)$ for such surface $S$.

On the other hand, if $H$ is a subgroup of $G$ and the group $H$ acts on a space $X$, then we have the natural map $G\times_H X\rightarrow X\hcoker H$. Hence for $H=\Diff(S, {\bf z})$, $G=\Diff(S,\partial S)$ and $X= \mathrm{Bun}_{\partial S}(T(S-{\bf z}),\nu^*(\gamma))$, we obtain a natural map
\[
\mathrm{CBun}_k(S)\rightarrow \mathcal{M}_{\nu}(S,{\bf z}).
\]
The naturality of the above map implies that we have the following homotopy commutative diagram
\begin{equation}\label{ff}
 \begin{tikzpicture}[node distance=3.9cm, auto]
  \node (A) {$ \mathrm{CBun}_k(S)$};
  \node (B) [right of=A] {$\mathcal{M}_{\nu}(S,{\bf z})$};
  \node (C) [below of= A, node distance=1.8cm ] {$\mathrm{BBr}_{|{\bf z}|}(S)$};  
  \node (D) [right of=C] {$\mathrm{BMod}(S, {\bf z}).$};
   \draw [->] (A) to node {$$}(B);
  \draw [->] (C) to node {$$}(D);
  \draw [->] (A) to node {$\pi$}(C);
  \draw [->] (B) to node {$$} (D);
  \draw[->,dotted] (C) to node {$\alpha$} (B);
\end{tikzpicture}
\end{equation}
Recall by \Cref{section}, the projection $\pi$ has a section, therefore the dotted arrow $\alpha$ exists that makes the bottom triangle commute up to homotopy.
 \end{proof}
 \begin{rem}
 Since we do not know the analogue of Mather-Thurston's theorem (\Cref{foliation}) for diffeomorphisms with marked points, we still do not know if the map
 \[
H^*(\mathrm{Mod}(\bD^2, {\bf z});A)\to H^*(\dDiff(\bD^2,{\bf z});A)
\]
is split injective in all cohomological degrees and for all abelian groups $A$. But Salter-Tshishiku (\cite{salter2015nonrealizability}) used Thurston's stability theorem (\cite[Theorem 2]{thurston1974generalization}) to show that in fact the projection 
\[
\dDiff(\bD^2,{\bf z})\to \mathrm{Mod}(\bD^2, {\bf z}),
\]
does not admit a section. Hence it is still unknown if the existence of a section has a cohomological obstruction in this case.
 \end{rem}
 \begin{rem}
 There are other cases  that similar statements as \Cref{punctures} holds, for example the map $H^*(\BDiff(M);A)\to H^*(\BdDiff(M);A)$ is known to be injective for $M=S^1, S^3$ and any hyperbolic three manifolds (see \cite{nariman2016powers} for more details). 
 \end{rem}
 \begin{rem}\label{a} By the theorem of Hatcher $\Diff(\bD^3,\partial \bD^3)$ is contractible (\cite{hatcher1983proof}), thus using smoothing theory and the fibration \ref{fibration}, we deduce that $\BDiff(\bD^3-{\bf z}, \partial \bD^3)\simeq C_{|{\bf z}|}(\bD^3)$. Therefore a similar argument shows that 
\[
H^*(\BDiff(\bD^3-{\bf z}, \partial \bD^3);A)\hookrightarrow H^*(\BdDiff(\bD^3-{\bf z}, \partial \bD^3);A)
\]
is also injective. But unlike the punctured $2$ disk, $\BDiff(\bD^3-{\bf z}, \partial \bD^3)\nsimeq \mathrm{BMod}(\bD^3,{\bf z})$ where the mapping class group $\mathrm{Mod}(\bD^3,{\bf z})$ is in fact isomorphic to the permutation group on $|{\bf z}|$ letters. 
\end{rem}
\begin{rem}
One can in fact show that for a surface $S$ with boundary $H_i(\BdDiff(S-{\bf z},\partial S);\bZ)$ is independent of the number of the points $|{\bf z}|$ while $i\ll{|\bf z|}$. We will not pursue this homological stability phenomenon in this paper.
\end{rem}
\subsection{Translating \Cref{punctures} to the language of bordism} \label{sec:4} We are often interested in understanding the homology of a discrete group $G$ namely infinite symmetric group, braid groups on infinite number of strands or compactly supported diffeomorphisms of $\bR^n$. In desirable cases, $G$ is either perfect or has a canonical perfect subgroup. Therefore it makes sense to take the Quillen plus construction of $\mathrm{B}G$. The plus construction of $\mathrm{B}G$  is more amenable to homotopy theory and it is often weakly equivalent to an iterated loop space.  Fuks in \cite{fuks1974quillenization} found a differential-topological formulation for few results of this type. Roughly, the homotopy theoretical object that is homology equivalent to $\mathrm{B}G$ or weakly equivalent to its plus construction, classifies a certain structure on smooth manifolds. Among such structures, there are naturally selected ``non-singular" structures that have geometric significance and are classified by $\mathrm{B}G$. Hence, the homology equivalence of the space $\mathrm{B}G$ to the homotopy theoretical object can be translated as the manifolds with specific structures are bordant to  manifolds with  ``non-singular" such structures. 

In this section, we give a differential topological meaning to \Cref{punctures}. Let us first recall the bordism formulation of G.Segal's theorem (\cite[Theorem 3]{segal1973configuration}) which in a special case says that there is a map
\begin{equation}\label{scanningmap}
\phi:\mathrm{BMod}(\bD^2, {\bf z})\to \Omega^2\bS^2
\end{equation}
that is homology isomorphism onto the connected component that it hits while the homological degree is less than $n/2$. We briefly describe the map $\phi$. The appropriate model for $\mathrm{BMod}(\bD^2, {\bf z})$ is the configuration space $C_n(\bR^2)$ of $|{\bf z}|=n$ unordered points in $\bR^2$. We identify $\bS^2$ with the one-point compactification of $\bR^2$. To every configuration of points $\xi\in C_n(\bR^2)$, we associate a map $\phi(\xi): (\bS^2,\infty)\to (\bS^2,\infty)$ which sends the point at infinity to itself. We surround each point of $\xi$ by a ball of radius $d/3$ where $d$ is the minimum distance between distinct points in the set $\xi$. The map $\phi(\xi)$ is the map $\bR^2\to \bS^2$ that sends the complement of the balls around points in $\xi$ to the base point $\infty$ and on each ball is the canonical map $\bD^2\to \bS^2$ which collapses the boundary to the point at infinity. The map $\phi$ lands in $\Omega_n^2\bS^2$ the degree $n$ component of $\Omega^2\bS^2$, but since $\Omega^2\bS^2$ is an H-space all of its components  are in fact homotopy equivalent.

In fact Segal showed that similarly defined  map $\phi: C_n(\bR^k)\to \Omega^k\bS^k$ as above induces a homology isomorphism onto the connected component that it hits in the same range of degrees as above. But we are more interested in the case $k=2$, because $C_n(\bR^k)$ is a $K(G,1)$ space for $k=2$ where $G=\mathrm{Br}_n(\bD^2)$.  
\begin{defn}
 Let a {\it $B_k$-object} be a triple $(M, N, \epsilon)$, where $M$ is an oriented compact smooth manifold, $N$ is an oriented smooth compact submanifold of the product $M \times \text{int}(\bD^k)$ of dimension equal to the dimension of the manifold $M$, and $\epsilon$ is a trivialization of the normal bundle to $N$ in $M\times \text{int}(\bD^k)$. We say $(M, N, \epsilon)$ is ``non-singular" if the restriction of the projection map $ p:M\times \text{int}(\bD^k) \to M$ to $N$ is a regular map i.e. at every point of $N$ the derivative of $p|_N$ is non-degenerate. We say $(M,N,\epsilon)$ is bordant to $(M',N',\epsilon')$ if there exists another $B_k$ object $(W,V,\eta)$ whose boundary is $(M\coprod -M', N\coprod -N', \epsilon\coprod \epsilon')$.
 \end{defn}
 Fuks in \cite{fuks1974quillenization} proved that the Segal's theorem (\cite[Theorem 3]{segal1973configuration}) is equivalent to
 \begin{thm}[Fuks 1974]\label{Fuks}
 Every $B_k$-object which does not have boundary is bordant to a nonsingular $B_k$-object. Bordant nonsingular $B_k$-objects can be connected by a nonsingular bordism.
 \end{thm}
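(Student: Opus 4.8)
The plan is to prove \Cref{Fuks} --- together with its asserted equivalence with Segal's theorem --- by a parametrized Pontryagin--Thom argument that reinterprets both sentences as statements about the scanning maps $C_n(\bR^k)\to\Omega^k_n\bS^k$ of \eqref{scanningmap}, assembled into $\phi\colon\coprod_{n\ge0}C_n(\bR^k)\to\Omega^k\bS^k$. The first step is the dictionary for general $B_k$-objects. By definition a $B_k$-object $(M,N,\epsilon)$ is an oriented closed $d$-manifold $M$ together with a codimension-$k$ submanifold $N\subset M\times\text{int}(\bD^k)\cong M\times\bR^k$ and a framing $\epsilon$ of its normal bundle; collapsing the complement of a tubular neighbourhood of $N$ and trivializing that neighbourhood by $\epsilon$ produces a map $M\times\bD^k/(M\times\partial\bD^k)=M_+\wedge\bS^k\to\bS^k$, i.e.\ after adjunction a map $c_{(M,N,\epsilon)}\colon M\to\Omega^k\bS^k$, well defined up to homotopy. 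The relative Pontryagin--Thom theorem (in the form dictated by the collar conditions $\partial N\subset\partial M\times\bR^k$ in the definition) shows that $(M,N,\epsilon)\mapsto (M,c_{(M,N,\epsilon)})$ is a bijection between the set of bordism classes of $B_k$-objects with $\dim M=d$ and the oriented bordism group $\Omega^{\mathrm{SO}}_d(\Omega^k\bS^k)$.

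Next I would locate the non-singular objects inside this picture. If $(M,N,\epsilon)$ is non-singular then $p|_N\colon N\to M$ is a submersion between manifolds of the same dimension, hence --- $N$ being compact --- an $n$-sheeted covering for some $n$, so the vertical inclusion $N\hookrightarrow M\times\text{int}(\bD^k)$ over $M$ is precisely a map $M\to C_n(\text{int}(\bD^k))\cong C_n(\bR^k)$; moreover for such $N$ the normal bundle is canonically the pullback of $T(\text{int}(\bD^k))\cong\bR^k$, so $\epsilon$ is homotopic to the standard framing and carries no extra information. Thus the set of bordism classes of non-singular $B_k$-objects with $\dim M=d$ is $\bigoplus_{n\ge0}\Omega^{\mathrm{SO}}_d(C_n(\bR^k))=\Omega^{\mathrm{SO}}_d\!\left(\coprod_{n\ge0}C_n(\bR^k)\right)$, and since the Pontryagin--Thom collapse of a non-singular object is literally the scanning construction (collapse the complement of small disjoint balls around the $n$ points), the map ``forget non-singularity'' is exactly $\phi_*$. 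Consequently the first sentence of \Cref{Fuks} is equivalent to surjectivity, and the second to injectivity, of $\phi_*\colon\Omega^{\mathrm{SO}}_*\!\left(\coprod_n C_n(\bR^k)\right)\to\Omega^{\mathrm{SO}}_*(\Omega^k\bS^k)$.

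It then remains to feed in Segal's theorem (\cite[Theorem 3]{segal1973configuration}), recalled above: on each component $\phi$ restricts to $C_n(\bR^k)\to\Omega^k_n\bS^k$, a homology isomorphism through homological degrees of order $n/2$, and all components of $\Omega^k\bS^k$ are homotopy equivalent since it is an $H$-space. Because $\Omega^{\mathrm{SO}}_*(-)$ is a generalized homology theory, a homology isomorphism of spaces induces an isomorphism on $\Omega^{\mathrm{SO}}_*$ over the same range --- for instance the mapping cone of $C_n(\bR^k)\to\Omega^k_n\bS^k$ is $\bZ$-acyclic through that range, hence acyclic for all coefficients by the universal coefficient theorem, hence $\Omega^{\mathrm{SO}}_*$-acyclic there by its Atiyah--Hirzebruch spectral sequence. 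Summing over $n$ yields the surjectivity and injectivity above, that is, both assertions of \Cref{Fuks}; running the Atiyah--Hirzebruch comparison in the opposite direction (using that $\Omega^{\mathrm{SO}}_*$ is connective with $\Omega^{\mathrm{SO}}_0=\bZ$) recovers Segal's homology isomorphism from the two bordism statements, which is the equivalence Fuks asserts.

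The step I expect to be the main obstacle is matching the gradings. The number of sheets $n$ of $p|_N$ is a bordism invariant of a $B_k$-object (it is the degree of $p|_N$), so the reduction proceeds component by component of $\coprod_n C_n(\bR^k)$, and there Segal's theorem is only a homology isomorphism in a range growing with $n$; matching this range against $\dim M$ --- equivalently, working with the stabilized form of Segal's theorem, or stating \Cref{Fuks} in the corresponding range --- is the delicate bookkeeping carried out in \cite{fuks1974quillenization}. The only other point that needs attention is the relative Pontryagin--Thom correspondence itself: that a bordism of $B_k$-objects with its prescribed behaviour along $\partial M\times\bR^k$ corresponds on the nose to a bordism class in $\Omega^{\mathrm{SO}}_*(-)$, and that the framing $\epsilon$ is genuinely immaterial for non-singular objects --- both routine transversality once the collars built into the definition are used consistently.
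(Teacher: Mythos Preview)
Your approach is essentially the same as the paper's sketch: translate $B_k$-objects into $\mathrm{MSO}_*(\Omega^k\bS^k)$ and nonsingular ones into $\mathrm{MSO}_*(C_n(\bD^k))$ via Pontryagin--Thom, then invoke Segal's theorem together with the Atiyah--Hirzebruch spectral sequence to conclude that $\phi_*$ is an isomorphism in the relevant range. Your write-up is in fact more systematic than the paper's sketch---you make the dictionary explicit, separate the two sentences into surjectivity and injectivity of $\phi_*$, and note the converse direction giving the equivalence with Segal's theorem---while flagging the same range/grading issue the paper leaves implicit.
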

 \begin{proof}[Sketch]
 By applying the oriented bordism functor $\mathrm{MSO}_*$ to the  map $\phi$, we know by Atiyah-Hirzebruch spectral sequence that 
 \[
\phi_i: \mathrm{MSO}_i(C_n(\bD^k))\to \mathrm{MSO}_i(\Omega_n^k\bS^k)
 \]
 is isomorphism as long as $n\geq 2i$. An element in  $\mathrm{MSO}_i(\Omega_n^k\bS^k)$ can be thought of as a map $f: M\times \bS^k\to \bS^k$ for some $i$-dimensional manifold $M$ where $f(M\times \infty)=\infty$. We can assume that $f$ is smooth and is transverse to zero, hence $f^{-1}(0)=N\subset M\times \bS^k$ is a codimensioon $k$ submanifold with a trivial normal bundle. Let $\epsilon$ be the trivialization obtained by the canonical frames at zero in $\bS^k$. Since $\phi_i$ is an isomorphism for $n$ large, there exists a manifold $W$ of dimension $i+1$ and a map $G: W\to \Omega^k\bS^k$ such that $G(\partial W\backslash M)\subset \phi(C_n(\bD^k))$. One can assume that the map $g:W\times \bS^k\to \bS^k$ which is the adjoint to $G$ is transverse to zero. Similarly, we can choose a canonical trivialization $\eta$ for $V=g^{-1}(0)$. It is easy to see that $(W,V,\eta)$ is a bordism between $(M,N,\epsilon)$ and a non-singular triple $(M',N',\epsilon')$. 
 \end{proof}

 Recall from \Cref{foliation} and \Cref{a} that we know the map in 
 \begin{equation}\label{d}
 \BdDiff(\bD^k-{\bf z},\partial\bD^k)\to \mathrm{CBun}_{|{\bf z}|}(\bD^k),
 \end{equation}
 is a homology equivalence for $k=2$ and $k=3$. To interpret this homology isomorphism from differential topological point of view, we  define the following obejcts:   
 \begin{defn}
Let a {\it $C_k$-object} be a quadruple  $(M, N, \epsilon,\mathcal{F})$, where   $(M, N, \epsilon)$ is a $B_k$-object and $\mathcal{F}$ is a $\mathrm{S}\Gamma_k$-structure with a trivial normal bundle on $M\times \bD^k\backslash N$ such that near the boundary of the fibers of $M\times \bD^k\backslash N\to M$ coincides with the horizontal foliation (i.e. foliation given by pull back of the point foliation via $M\times \bD^k\to \bD^k$). A $C_k$-object is nonsingular if the $\mathrm{S}\Gamma_k$-structure structure comes from a codimension $k$ foliation transverse to the fiber of $M\times \bD^k\backslash N\to M$.
 \end{defn}
 Similar to Fuks' theorem, we can translate the homology isomorphism induced by the map in \ref{d} to the language of bordism as follows
 \begin{prop}
For $k=2$ and $k=3$, every $C_k$-object which does not have boundary is bordant to a nonsingular $C_k$-object. Bordant nonsingular $C_k$-objects can be connected by a nonsingular bordism.
 \end{prop}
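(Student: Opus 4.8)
The plan is to mirror the proof of Fuks' \Cref{Fuks} verbatim, but with the space $\Omega^k \bS^k$ replaced by the homotopy-theoretic model $\mathrm{CBun}_{|{\bf z}|}(\bD^k)$ and with $C_n(\bD^k)$ replaced by $\BdDiff(\bD^k - {\bf z}, \partial \bD^k)$. First I would observe that a $C_k$-object without boundary, i.e.\ a quadruple $(M, N, \epsilon, \mathcal{F})$, is precisely the geometric data classified by a map $M \to \mathrm{CBun}_{|{\bf z}|}(\bD^k)$: the underlying $B_k$-object $(M,N,\epsilon)$ records the normal data of the embedded codimension-$k$ submanifold with trivialized normal bundle (as in the sketch of \Cref{Fuks}, this is the adjoint data of a fiberwise map $M \times \bS^k \to \bS^k$ transverse to $0$), while the $\mathrm{S}\Gamma_k$-structure $\mathcal{F}$ on the complement $M \times \bD^k \setminus N \to M$, agreeing with the horizontal foliation near the fiber boundary, is exactly the extra bundle-map datum in $\mathrm{Bun}_{\partial \bD^k}(T(\bD^k - {\bf x}), \nu^*(\gamma))$ living over the configuration ${\bf x} \in C_k(\bD^k)$. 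Thus $C_k$-objects over $M$, up to bordism, biject with the bordism group $\mathrm{MSO}_{\dim M}(\mathrm{CBun}_{|{\bf z}|}(\bD^k))$, and ``nonsingular'' $C_k$-objects — where $\mathcal{F}$ is genuinely a codimension-$k$ foliation transverse to the fibers — correspond, via \Cref{d} together with \Cref{a}, to elements in the image of $\mathrm{MSO}_*(\BdDiff(\bD^k - {\bf z}, \partial \bD^k))$.

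Next, I would apply the oriented bordism functor $\mathrm{MSO}_*$ to the homology equivalence \Cref{d}, $\BdDiff(\bD^k - {\bf z}, \partial \bD^k) \to \mathrm{CBun}_{|{\bf z}|}(\bD^k)$, valid for $k=2,3$. Since $\mathrm{MSO}_*$ is a generalized homology theory, a homology equivalence induces an isomorphism on $\mathrm{MSO}_*$ (by the Atiyah--Hirzebruch spectral sequence comparison, exactly as invoked in the sketch of \Cref{Fuks}). Hence
\[
(\ref{d})_* : \mathrm{MSO}_i(\BdDiff(\bD^k - {\bf z}, \partial \bD^k)) \xrightarrow{\ \cong\ } \mathrm{MSO}_i(\mathrm{CBun}_{|{\bf z}|}(\bD^k))
\]
is an isomorphism in every degree $i$. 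Given a $C_k$-object $(M, N, \epsilon, \mathcal{F})$ without boundary, classify it by $c : M \to \mathrm{CBun}_{|{\bf z}|}(\bD^k)$; the surjectivity of the displayed map produces a manifold $M'$, a codimension-$k$ foliated structure on it (i.e.\ a lift to $\BdDiff$, which geometrically is a nonsingular $C_k$-object $(M', N', \epsilon', \mathcal{F}')$), and — from an explicit chain-level witness of the bordism class agreeing in $\mathrm{MSO}_*$ — an actual bordism $W$ of $C_k$-objects from $(M,N,\epsilon,\mathcal{F})$ to $(M',N',\epsilon',\mathcal{F}')$. The second assertion, that two bordant nonsingular $C_k$-objects are connected by a nonsingular bordism, follows from the injectivity of the displayed map: two lifts to $\BdDiff$ whose images in $\mathrm{MSO}_*(\mathrm{CBun})$ are bordant are already bordant in $\mathrm{MSO}_*(\BdDiff)$, and such a bordism is realized geometrically by a manifold $W'$ carrying a codimension-$k$ foliation transverse to the fibers, i.e.\ a nonsingular $C_k$-bordism. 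Here I would use transversality to turn the abstract bordism in $\mathrm{MSO}_*$ into a smooth cobordism of the total spaces $M' \times \bD^k \setminus N'$ equipped with the foliation data, precisely paralleling the transversality and canonical-trivialization steps in the sketch of \Cref{Fuks}.

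I expect the main obstacle to be the bookkeeping in the dictionary step: checking carefully that the homotopy-theoretic model $\mathrm{CBun}_{|{\bf z}|}(\bD^k)$ really does classify $C_k$-objects in the stated sense, and in particular that the $\mathrm{S}\Gamma_k$-structure on $M \times \bD^k \setminus N \to M$ with the prescribed behavior near the fiber boundary matches the bundle-map fiber $\mathrm{Bun}_{\partial \bD^k}(T(\bD^k - {\bf x}), \nu^*(\gamma))$ under Haefliger/Mather--Thurston theory, including the normal-bundle triviality condition. One must also be slightly careful that the configuration coordinate ${\bf x} \in C_k(\bD^k)$ of a point of $\mathrm{CBun}_{|{\bf z}|}(\bD^k)$ corresponds to the submanifold $N$ — that is, that $N$ is the fiberwise locus of the moving punctures — and that the ``nonsingularity'' condition ($p|_N$ a regular map, resp.\ $\mathcal{F}$ an honest foliation) is compatible across the two pictures. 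A secondary technical point, as in Fuks, is the passage from an equality of classes in $\mathrm{MSO}_*$ to a geometric bordism of the foliated total spaces; this is standard transversality but requires care near the boundaries of the $\bD^k$-fibers so that the ``compactly supported''/horizontal-near-boundary condition is preserved throughout the bordism. Modulo these identifications, the argument is formally identical to the proof of \Cref{Fuks}, with \Cref{d} (for $k=2,3$) in place of Segal's theorem.
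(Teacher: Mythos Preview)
Your proposal is correct and follows essentially the same route as the paper: set up the dictionary between $C_k$-objects and bordism classes in $\mathrm{MSO}_*(\mathrm{CBun}_{|{\bf z}|}(\bD^k))$, identify nonsingular $C_k$-objects with the image of $\mathrm{MSO}_*(\BdDiff(\bD^k-{\bf z},\partial\bD^k))$, and then use that the homology equivalence \ref{d} induces an isomorphism on $\mathrm{MSO}_*$ (surjectivity giving the first assertion, injectivity the second). The paper's proof is the same argument, stated slightly more tersely; your flagged caveat that the configuration coordinate forces $N\to M$ to be a covering (so that the underlying $B_k$-object is already nonsingular) is exactly the identification the paper makes when describing elements of $\mathrm{MSO}_*(\mathrm{CBun}_{|{\bf z}|}(\bD^k))$.
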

 \begin{proof}
Recall that for a manifold $M$, homotopy classes of maps $\pi:M\to C_{|{\bf z}|}(\bD^k)$ are in bijection with the bordism class of $B_k$-objects $(M,N,\epsilon)$ such that $N\to M$ is a $|{\bf z}|$-covering map. Note that for a $B_k$-object $(M,N,\epsilon)$, the projection $p:M\times \bD^k\backslash N\to M$ is a fiber bundle whose fibers are diffeomorphic to $\bD^k-{\bf z}$ where $|{\bf z}|$ is the degree of the covering map $N\to M$. Now suppose the classifying map $\pi$ lifts to the configuration bundle maps
 \[
 \begin{tikzpicture}[node distance=1.8cm, auto]
  \node (A) {$M$};
  \node (B) [right of=A] {$C_{|{\bf z}|}(\bD^k),$};
  \node (C) [above of= B ] {$\mathrm{CBun}_{|{\bf z}|}(\bD^k)$};  
   \draw [->] (A) to node {$\pi$}(B);
  \draw [->] (C) to node {$$}(B);
  \draw [->, dotted] (A) to node {$$}(C);
\end{tikzpicture}
 \]
then the bundle structure on the vertical tangent bundle of $p:M\times \bD^k\backslash N\to M$ lifts to $\mathrm{BS}\Gamma_k$. Thus there exists a $\mathrm{S}\Gamma_k$-structure $\mathcal{F}$ on $M\times \bD^k\backslash N$. Hence an element in the bordism group $\mathrm{MSO}_*( \mathrm{CBun}_{|{\bf z}|}(\bD^k))$ consists of a data $(M,N,\epsilon, \mathcal{F})$ where $N\subset M\times \bD^k$ has a trivial normal bundle with the trivialization $\epsilon$ such that the projection map $N\to M$ is regular (covering map in this case whose degree is $|{\bf z}|$) at every point in $N$ and $\mathcal{F}$ is a codimension $k$ Haefliger structure on $M\times \bD^k\backslash N$ that coincides with the horizontal foliation near the boundary of the fibers $M\times \bD^k\to M$.  From \Cref{foliation} and \Cref{a}, we know that 
 \begin{equation}\label{eq:1}
 \mathrm{MSO}_*(\BdDiff(\bD^k-{\bf z},\partial\bD^k))\to \mathrm{MSO}_*( \mathrm{CBun}_{|{\bf z}|}(\bD^k))
 \end{equation}
 is an isomorphism which means that $(M,N,\epsilon, \mathcal{F})$ is cobordant to $(M',N',\epsilon',\mathcal{F}')$ where $N'\subset M'\times \bD^k$, the projection $N'\to M'$ is $|{\bf z}|$-cover and $\mathcal{F}$ is a foliation on the fiber bundle $ M'\times \bD^k\backslash N'\to M'$ which is transverse to the fibers.
  \end{proof}

Therefore, we can translate the splitting theorem \ref{punctures} to
\begin{thm}
For $k=2$ and $k=3$, every $B_k$-object $(M,N,\epsilon)$ is bordant to a nonsingular $B_k$-object $(M',N',\epsilon')$ where the fiber bundle $M'\times \bD^k\backslash N'\to M'$ given by the projection to $M'$ admits a foliation transverse to the fibers. 
\end{thm}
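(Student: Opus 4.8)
The plan is to read the statement off from the proofs already in place: \Cref{Fuks} turns an arbitrary $B_k$-object into a nonsingular one, the section of \Cref{section} --- which is exactly the geometric shadow of the splitting in \Cref{punctures} --- promotes it to a $C_k$-object, and the preceding proposition then replaces that $C_k$-object by a nonsingular one. All of the analytic content (Mather--Thurston theory behind \eqref{d}, including the case $k=3$ of \Cref{a}, and Segal's theorem behind \Cref{Fuks}) is already available, so what remains is bookkeeping with $B_k$- and $C_k$-objects.

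First I would take a $B_k$-object $(M,N,\epsilon)$ with $\partial M=\emptyset$ and $k\in\{2,3\}$, and, passing to the components of $M$, assume $M$ connected. By \Cref{Fuks} it is bordant to a nonsingular $B_k$-object, so I may assume the projection $p|_N\colon N\to M$ is regular; since $\dim N=\dim M$ and $N$ is compact this means $N\to M$ is an $n$-fold covering for some $n\geq 0$, the case $n=0$ being immediate because $M\times\bD^k\to M$ already carries the horizontal foliation. Hence $(M,N,\epsilon)$ is classified by a map $\pi\colon M\to C_n(\bD^k)$. Now compose with the section $s\colon C_n(\bD^k)\to \mathrm{CBun}_n(\bD^k)$ of \Cref{section}; by the very definition of $\mathrm{CBun}$, the lift $s\circ\pi\colon M\to \mathrm{CBun}_n(\bD^k)$ equips the complement bundle $M\times\bD^k\backslash N\to M$ with an $\mathrm{S}\Gamma_k$-structure whose normal bundle is the vertical tangent bundle and which is horizontal near the boundary of the fibers. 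In other words $(M,N,\epsilon)$ has been upgraded to a $C_k$-object $(M,N,\epsilon,\mathcal{F})$.

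Next I would invoke the preceding proposition: for $k\in\{2,3\}$ the $C_k$-object $(M,N,\epsilon,\mathcal{F})$ is bordant to a nonsingular $C_k$-object $(M',N',\epsilon',\mathcal{F}')$, for which $N'\to M'$ is an $n$-fold covering --- so the underlying triple $(M',N',\epsilon')$ is a nonsingular $B_k$-object --- and $\mathcal{F}'$ is an honest codimension-$k$ foliation transverse to the fibers of $M'\times\bD^k\backslash N'\to M'$. Forgetting the $\mathrm{S}\Gamma_k$-structures, a $C_k$-bordism is a $B_k$-bordism, so $(M,N,\epsilon)$ is $B_k$-bordant to $(M',N',\epsilon')$, and this is the assertion.

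I do not expect a genuine obstacle --- the theorem is a translation rather than a new result, and its substance lives in the three inputs above. The points that want care are: checking that a lift of $\pi$ to $\mathrm{CBun}_n(\bD^k)$ really produces a $C_k$-object in the precise sense of the definition (identifying the normal bundle of the $\mathrm{S}\Gamma_k$-structure with the vertical tangent bundle, and the collar condition at the fiber boundary), and noting that ``nonsingular $C_k$-object'' in the conclusion of the preceding proposition does record that $N'\to M'$ is a covering, so that $(M',N',\epsilon')$ is nonsingular in the $B_k$-sense; both are immediate from the definitions and from the proof of that proposition, which builds $N'\to M'$ as a $|{\bf z}|$-fold cover. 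One should also observe that the connecting bordism inherits only an $\mathrm{S}\Gamma_k$-structure rather than a transverse foliation, but this is harmless since the theorem constrains only the far end of the bordism.
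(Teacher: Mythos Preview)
Your proposal is correct and matches the paper's intent. The paper itself gives no explicit proof of this theorem; it simply declares it a ``translation'' of \Cref{punctures} via the bordism dictionary developed in \Cref{sec:4}. Your three-step argument (Fuks $\Rightarrow$ nonsingular $B_k$-object; \Cref{section} $\Rightarrow$ $C_k$-object; preceding proposition $\Rightarrow$ nonsingular $C_k$-object; then forget the Haefliger structure on the bordism) is precisely the unpacking of that translation, and you have correctly located where the restriction $k\in\{2,3\}$ enters --- namely in the proposition, which rests on Mather--Thurston and, for $k=3$, on \Cref{a}.

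One small point worth making explicit in your write-up: the preceding proposition is stated for ``every $C_k$-object'', but its proof only treats those whose underlying $B_k$-object is already nonsingular (i.e., those classified by $\mathrm{CBun}_{|{\bf z}|}(\bD^k)$). Your use of Fuks' theorem in step~1 is therefore not optional --- it is what puts you in the regime where the proposition actually applies. You already handle this correctly, but it is worth flagging rather than leaving it to the reader.
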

\subsection{Inducing up the map $\mathrm{BBr}_{2g+2}(\bD^2)\to \mathrm{BMod}(\Sigma_{g,2})$}\label{induceup} Recall that from the introduction that there is a map
\begin{equation}\label{phi}
\phi: {C}_{2g+2}(\bD^2)\to \mathcal{M}_{g,2}
\end{equation}
 which sends $2g+2$ points ${\bf z}= \{z_1,z_2,\dots, z_{2g+2}\}$ to the Riemann surface $\Sigma_{{\bf z}}$ given by 
 \[
 f_{{\bf z}}(z)^2 =\prod_{i=1}^{2g+2} (z-z_i)
 \]
 which is a branch cover over the unit disk with ${\bf z}$ as branched points. Let $\pi_{\bf z}: \Sigma_{{\bf z}}\to \bD^2$ denote the branched covering map. Note that the map $\phi$ induces a group homomorphism between the fundamental groups
 \[
 \psi: \mathrm{Br}_{2g+2}(\bD^2)\to \mathrm{Mod}(\Sigma_{g,2}).
 \]
First let us observe that the homomorphism $\psi$ can be lifted to a group homomorphism 
 \[
 \psi':\dDiff(\bD^2, (2g+2)-\text{marked points})\to \mathrm{Homeo}(\Sigma_{g,2},\partial \Sigma_{g,2}).
 \]
 This is because by the covering space theory, we can lift every diffeomorphism of $\bD^2-{\bf z}$ to a diffeomorphism of $\Sigma_{\bf z}-\pi_{\bf z}^{-1}({\bf z})$ and we can extend  a diffeomorphism of  $\Sigma_{\bf z}-\pi_{\bf z}^{-1}({\bf z})$ over the ramification points to obtain a homeomorphism.
 
  But for this lift $\psi'$, it is easy to see that the image of $\psi'$ does not land in $\dDiff(\Sigma_{g,2},\partial \Sigma_{g,2}) $.  For example, let $f\in \dDiff(\bD^2,{\bf z})$ be a diffeomorphism that fixes $z_1$ and its derivative $Df_{z_1}$ in a coordinate is given by the linear transformation that sends  $(x,y)$ to $(2x,y)$. Any lift of $f$ to a homeomorphism of $\Sigma_{\bf z}$ cannot be differentiated at the ramification point above $z_1$. Because $\pi_{\bf z}$ is a degree $2$ map in a neighborhood of ramification points, if $D\psi'(f)$ exists at the ramification point above $z_1$, as a linear transformation in a coordinate should send $(x,0)$ to $(2x,0)$ and $(0,y)$ to $(0,2y)$ but it should fix pointwise the lines $x=y$ and $x=-y$ which is not possible. 
  
 However we show that there is no homological obstruction to lift $\psi$ to a group homomorphism $\Psi$ between diffeomorphism groups. Recall from \Cref{sec:2} that there exists a map
 \[
\BdDiff(\bD^2-{\bf z},\partial\bD^2)\to \mathrm{CBun}_{|{\bf z}|}(\bD^2),
 \]
which  induces a homology equivalence. Similarly for diffeomorphism groups of surfaces, we know (see \cite[Section 4.1.3]{nariman2015stable} for more details) that there is a map
 \[
 \BdDiff(\Sigma_{g,2},\partial\Sigma_{g,2})\to \mathrm{Bun}_{\partial \Sigma_{g,2}}(T(\Sigma_{g,2}),\nu^*(\gamma))\hcoker \Diff(\Sigma_{g,2},\partial\Sigma_{g,2})
 \]
 which is an isomorphism on homology. Recall from \Cref{foliation} that fixing an isomorphism between the tangent bundle $T(\Sigma_{g,2})$ and $\tau^*(\gamma)$, we can think of space of bundle maps $\mathrm{Bun}_{\partial \Sigma_{g,2}}(T(\Sigma_{g,2}),\nu^*(\gamma))$ as the space of lifts of the tangential structure of  $\Sigma_{g,2}$ to $\mathrm{BS}\Gamma_2$ 
   \[
 \begin{tikzpicture}[node distance=1.8cm, auto]
  \node (A) {$\Sigma_{g,2}$};
  \node (B) [right of=A] {$\mathrm{BGL}_{2}(\bR)$};
  \node (C) [above of= B ] {$\mathrm{BS}\Gamma_{2}$};  
   \draw [->] (A) to node {$\tau$}(B);
  \draw [->] (C) to node {$\nu$}(B);
  \draw [->, dotted] (A) to node {$f$}(C);
\end{tikzpicture}
 \]
 that are equal to the base section $s_0$ (induced by the point foliation) near the boundary $\partial \Sigma_{g,2}$. The goal in this section is to define a map
 \begin{equation}\label{eq:3}
\BdDiff(\bD^2,{\bf z})\to \mathrm{Bun}_{\partial \Sigma_{g,2}}(T(\Sigma_{g,2}),\nu^*(\gamma))\hcoker \Diff(\Sigma_{g,2},\partial\Sigma_{g,2})
 \end{equation}
 that naturally lives over $\phi$ in \ref{phi}.
 
 A model for the homotopy quotient $\mathrm{Bun}_{\partial \Sigma_{g,2}}(T(\Sigma_{g,2}),\nu^*(\gamma))\hcoker \Diff(\Sigma_{g,2},\partial\Sigma_{g,2})
$ is 
\[
\mathcal{H}(\Sigma_{g,2})\times \mathrm{Bun}_{\partial \Sigma_{g,2}}(T(\Sigma_{g,2}),\nu^*(\gamma))/\Diff(\Sigma_{g,2},\partial\Sigma_{g,2}),
\] 
where  $\mathcal{H}(\Sigma_{g,2})$  is the space of hyperbolic metrics on $\Sigma_{g,2}$. Note that the action of $\Diff(\Sigma_{g,2},\partial\Sigma_{g,2})$ on $\mathcal{H}(\Sigma_{g,2})$ is free. Since we have the following fiber bundle
\[
\mathrm{Bun}_{\partial \Sigma_{g,2}}(T(\Sigma_{g,2}),\nu^*(\gamma))\to \mathrm{Bun}_{\partial \Sigma_{g,2}}(T(\Sigma_{g,2}),\nu^*(\gamma))\hcoker \Diff(\Sigma_{g,2},\partial\Sigma_{g,2})\to \mathcal{M}_{g,2},
\]
we can think of the homotopy quotient $\mathrm{Bun}_{\partial \Sigma_{g,2}}(T(\Sigma_{g,2}),\nu^*(\gamma))\hcoker \Diff(\Sigma_{g,2},\partial\Sigma_{g,2})$ geometrically as the space of Riemann surfaces equipped with a lifting of the tangential structure from $\mathrm{BGL}_2(\bR)$ to $\mathrm{BS}\Gamma_2$. We use these models to prove

\begin{thm}\label{Phi}
There exists a map $\Phi$ which makes the following diagram homotopy commutative
   \[
 \begin{tikzpicture}[node distance=6.7cm, auto]
  \node (A) {$ \BdDiff(\bD^2, (2g+2) \text{ marked points})$};
  \node (B) [right of=A] {$\mathrm{Bun}_{\partial \Sigma_{g,2}}(T(\Sigma_{g,2}),\nu^*(\gamma))\hcoker \Diff(\Sigma_{g,2},\partial\Sigma_{g,2})$};
  \node (C) [below of= A, node distance=1.8cm ] {$\mathrm{BBr}_{2g+2}(\bD^2)$};  
  \node (D) [right of=C] {$\mathrm{BMod}(\Sigma_{g,2}).$};
   \draw [->] (A) to node {$\Phi$}(B);
  \draw [->] (C) to node {$\phi$}(D);
  \draw [->] (A) to node {$$}(C);
  \draw [->] (B) to node {$$} (D);
\end{tikzpicture}
\]

\end{thm}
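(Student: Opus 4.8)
The plan is to build $\Phi$ by hand, directly on the classifying space, using the branched double cover to move the tautological foliated disc bundle over $\BdDiff(\bD^2,(2g+2)\text{ marked points})$ onto $\Sigma_{g,2}$-bundles, and then repairing the resulting tangential structure over the ramification locus. Write $G=\dDiff(\bD^2,{\bf z})$, so $\BdDiff(\bD^2,(2g+2)\text{ marked points})=\mathrm{B}G$. Because $G$ genuinely fixes the set ${\bf z}$, the tautological flat $\bD^2$-bundle $p_0\colon\mathcal{E}_0\to\mathrm{B}G$ carries both a codimension $2$ foliation $\mathcal{F}_0$ transverse to $p_0$ and a codimension $2$ submanifold $R_0\subset\mathcal{E}_0$ ($=$ the union of the marked sections, a $(2g+2)$-sheeted cover of $\mathrm{B}G$) which is a union of leaves of $\mathcal{F}_0$; the underlying bundle-with-marked-locus is the one classified by the canonical map $\mathrm{B}G\to\mathrm{BBr}_{2g+2}(\bD^2)=C_{2g+2}(\bD^2)$. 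Applying the construction of $\phi$ in \eqref{phi} fibrewise yields a smooth $\Sigma_{g,2}$-bundle $p\colon\mathcal{E}\to\mathrm{B}G$ together with a fibrewise branched covering $\pi\colon\mathcal{E}\to\mathcal{E}_0$ ramified along $R:=\pi^{-1}(R_0)$. Since $\mathcal{E}$ is, fibrewise over $b\in\mathrm{B}G$ lying over ${\bf z}_b$, the Riemann surface $\Sigma_{{\bf z}_b}$, the bundle $\mathcal{E}$ is the pullback along $\mathrm{B}G\to C_{2g+2}(\bD^2)$ of the surface bundle classified by $\phi$; hence $p$ lies over $\phi$.

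Away from $R$ the map $\pi$ is an honest double cover, so the restriction $\mathcal{F}_0|_{\mathcal{E}_0\setminus R_0}$ pulls back to a codimension $2$ foliation $\mathcal{F}=\pi^*(\mathcal{F}_0|_{\mathcal{E}_0\setminus R_0})$ on $\mathcal{E}\setminus R$ transverse to $p$. As recalled in \Cref{foliation}, such a foliation (via the identification of the vertical tangent bundle of $\mathcal{E}\setminus R$ with the normal bundle of $\mathcal{F}$, supplied by $d\pi$) determines a lift of the vertical tangent classifying map of $\mathcal{E}\setminus R$ to $\mathrm{BS}\Gamma_2$, i.e.\ a section $s_{\mathcal{F}}$ over $\mathrm{B}G$ of $\mathrm{Bun}_{\partial}(T^v(\mathcal{E}\setminus R),\nu^*(\gamma))$ equal to the standard section $s_0$ near $\partial\mathcal{E}$. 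If this extends to a section $s$ of $\mathrm{Bun}_{\partial}(T^v\mathcal{E},\nu^*(\gamma))$ over all of $\mathcal{E}$, still $s_0$ near $\partial\mathcal{E}$, then $s$ is by definition a map $\Phi\colon\mathrm{B}G\to\mathrm{Bun}_{\partial\Sigma_{g,2}}(T(\Sigma_{g,2}),\nu^*(\gamma))\hcoker\Diff(\Sigma_{g,2},\partial\Sigma_{g,2})$, and since it lies over $p$ it makes the square of \Cref{Phi} commute up to homotopy. So the whole problem reduces to extending $s_{\mathcal{F}}$ across the codimension $2$ locus $R$.

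For the extension, fix a fibrewise tubular neighbourhood $\mathcal{N}$ of $R$ with radius depending continuously on the point of $\mathrm{B}G$. In a local fibre coordinate $w$ in which $\pi$ is $w\mapsto w^2$, the section $s_{\mathcal{F}}$ reads $w\mapsto\bigl(s_0(w^2),\,d(w\mapsto w^2)\bigr)$, which does not extend across $w=0$ precisely because $d(w\mapsto w^2)$ degenerates there. But let $\overline{\mathrm{B}\Gamma}_2$ be the homotopy fibre of $\nu\colon\mathrm{BS}\Gamma_2\to\mathrm{BGL}_2(\bR)^+$: Thurston's theorem says $\overline{\mathrm{B}\Gamma}_2$ is $2$-connected, so the restriction of $s_{\mathcal{F}}$ to the fibre circle bounding the fibre disc of $\mathcal{N}$ is homotopic, through bundle maps over that circle, to the restriction of a bundle map over the whole fibre disc (one exists since the disc is contractible). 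Concatenating such a filling on an inner disc subbundle of $\mathcal{N}$ with an interpolation on a collar of $\partial\mathcal{N}$ supplies the extension $s$. The key point is that this repair need not be governed by a global obstruction: near $R$ the foliation $\mathcal{F}$ is, in suitable coordinates, always the same pullback of the horizontal foliation, so I would fix once and for all a single local model for the repair on the vertical tangent data over the fibre disc restricting to $s_{\mathcal{F}}$ near the boundary circle, and transport it over $\mathrm{B}G$ using only the continuously varying tubular neighbourhood and coordinate.

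The hard part is exactly this last step: making the repair across $R$ coherent over the whole classifying space $\mathrm{B}G$. The space of fillings of $s_{\mathcal{F}}$ over the fibre disc is path-connected (by the $2$-connectivity of $\overline{\mathrm{B}\Gamma}_2$) but it is not contractible --- its higher homotopy groups are where the Godbillon--Vey-type classes live --- so a blunt obstruction-theoretic argument would leave genuine obstruction classes in groups of the form $H^{*}(R;\pi_{*}(\overline{\mathrm{B}\Gamma}_2))$. This is why one must instead use a fixed local model, which is legitimate because the only data varying over $\mathrm{B}G$ is the choice of tubular neighbourhood and the local fibre coordinate (the latter only up to the deck involution $w\mapsto-w$), all of which can be chosen continuously; the points to check here are that the fixed model can be taken invariant under, or symmetrised over, the deck involution, and that the resulting $s$ still restricts to $s_{\mathcal{F}}$ on $\mathcal{E}\setminus R$ --- the latter being what lets one compute the induced map on $H_3$ via Godbillon--Vey classes in the subsequent remark.
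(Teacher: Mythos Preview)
Your geometric setup is correct and your instinct to build $\Phi$ by transporting the tautological foliated structure through the branched cover is exactly right. But you have made the problem harder than it is, and the hard part you identify---the extension across the ramification locus $R$---does not actually need to be solved by hand.

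The paper's observation is that the composite
\[
\BdDiff(\bD^2,{\bf z})\;\longrightarrow\;\BdDiff(\bD^2-{\bf z},\partial\bD^2)\;\longrightarrow\;\mathrm{CBun}_{|{\bf z}|}(\bD^2)
\]
lands in the subspace of pairs $({\bf z},\tilde f)$ where $\tilde f$ is a bundle map (equivalently, a lift $\bD^2\to\mathrm{BS}\Gamma_2$ of the tangent classifying map) defined on the \emph{whole} disc, not just on $\bD^2-{\bf z}$. This is precisely because $G=\dDiff(\bD^2,{\bf z})$ acts on all of $\bD^2$: in your language, the Haefliger structure associated to $\mathcal{F}_0$ is already a continuous map $\mathcal{E}_0\to\mathrm{BS}\Gamma_2$, including over $R_0$. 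Since a Haefliger structure, unlike a foliation, pulls back along \emph{any} continuous map, one simply sets $\Phi({\bf z},\tilde f)=(\Sigma_{\bf z},\,\tilde f\circ\pi_{\bf z})$ and the result is defined on all of $\Sigma_{\bf z}$ with no extension step at all. You restricted $\mathcal{F}_0$ to $\mathcal{E}_0\setminus R_0$ before pulling back because you wanted to pull back the \emph{foliation}, which indeed requires $\pi$ to be a submersion; pulling back the underlying $\mathrm{S}\Gamma_2$-structure has no such requirement.

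Your proposed workaround---a fixed local model transported via a continuously varying fibre coordinate---is where the genuine gap lies. The assertion that ``near $R$ the foliation $\mathcal{F}$ is, in suitable coordinates, always the same pullback of the horizontal foliation'' is only true locally over $\mathrm{B}G$; the coordinate changes between such local trivializations involve the full holonomy of $\mathcal{F}_0$ along $R_0$, i.e.\ the representation $G\to\mathrm{GL}_2(\bR)$ given by derivatives at the marked points, which is highly nontrivial. So a single fixed filling will not in general be compatible with these transitions, and you are thrown back onto the obstruction theory in $H^{*}(R;\pi_{*+1}(\overline{\mathrm{B}\Gamma}_2))$ that you yourself flag as nonvanishing. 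The paper's formula $\tilde f\circ\pi_{\bf z}$ is, in effect, the canonical coherent choice of filling you are looking for: it uses that the Haefliger data on the base disc already extends across the branch points, so compatibility over $\mathrm{B}G$ is automatic.
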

\begin{proof}
Here we think of bundle maps as the space of lifts of the tangential structure to $\mathrm{BS}\Gamma_2$. Note that  $\BdDiff(\bD^2,{\bf z})$ maps to the configuration section space by
\[
\BdDiff(\bD^2,{\bf z})\to \BdDiff(\bD^2-{\bf z},\partial\bD^2)\to \mathrm{CBun}_{|{\bf z}|}(\bD^2).
\]
The image of this compositions lands in the subspace $$\mathrm{Bun}_{\partial \bD^2}(T(\bD^2),\nu^*(\gamma))\hcoker \Diff(\bD^2,{\bf z})\subset \mathrm{CBun}_{|{\bf z}|}(\bD^2),$$ in other words the image consists of pairs $({\bf a},g)$ where ${\bf a}$ is a configuration of $|{\bf z}|$ points in the disk and a section $g$ that is defined over the entire disk $\bD^2$.

 Let ${\bf z}\in C_{2g+2}(\bD^2)$ be a configuration of $2g+2$ points. The surface $\phi({\bf z})=\Sigma_{{\bf z}}$ is a branch double cover over the disk
\[
\pi_{\bf z}: \Sigma_{{\bf z}}\to \bD^2,
\]
which is branched over ${\bf z}$. Let $\pi':\Sigma_{{\bf z}}-\pi_{\bf z}^{-1}({\bf z})\to \bD^2-{\bf z}$ be the double cover on the complement of the branch points. Consider the diagram 
\[
 \begin{tikzpicture}[node distance=1.8cm, auto]
  \node (A) {$\bD^2-{\bf z}$};
  \node (D) [left of=A, node distance=2.3cm] {$\Sigma_{{\bf z}}-\pi_{\bf z}^{-1}({\bf z})$};
  \node (B) [right of=A] {$\mathrm{BGL}_{2}(\bR)$};
  \node (C) [above of= B ] {$\mathrm{BS}\Gamma_{2}$};  
  \node (E) [above of=A] {$\bD^2$};
  \node (F) [above of= D] {$\Sigma_{\bf z}$};
  \draw [right hook->] (D) to node {$$} (F);
    \draw [right hook->] (A) to node {$$} (E);
   \draw [->] (E) to node {$\tilde{f}$}(C);
      \draw [->] (F) to node {$\pi_{\bf z}$}(E);
   \draw [->] (A) to node {$\tau'$}(B);
  \draw [->] (C) to node {$\nu$}(B);
  \draw [->, dotted] (A) to node {$f$}(C);
  \draw [->] (D) to node {$\pi'$}(A);
\end{tikzpicture}
 \]
 where $\tau'$ classifies the tangent bundle of $\bD^2-{\bf z}$. Now since the image of the map
 \[
 \BdDiff(\bD^2,(2g+2) \text{ marked points})\to \mathrm{CBun}_{|{\bf z}|}(\bD^2),
  \]
consists of the pairs $({\bf z}, \tilde{f})$ where ${\bf z}$ is in $C_{2g+2}(\bD^2)$ and the restriction of $\tilde{f}$ to the punctured disk $\bD^2-{\bf z}$  is $f$. Hence,  we can define a map
 \[
\Phi: \BdDiff(\bD^2,(2g+2) \text{ marked points})\to\mathrm{Bun}_{\partial \Sigma_{g,2}}(T(\Sigma_{g,2}),\nu^*(\gamma))\hcoker \Diff(\Sigma_{g,2},\partial\Sigma_{g,2})
 \]
 that sends $({\bf z},f)$ to $(\Sigma_{\bf z}, \tilde{f}\circ\pi_{\bf z})$. Therefore, we obtain the homotopy commutative diagram of the theorem.
 \end{proof}
\begin{cor}\label{lift}
There exists a lift of $\phi$ that makes the following diagram homotopy commutative
   \[
 \begin{tikzpicture}[node distance=2cm, auto]
   \node (C) {$\mathrm{BBr}_{2g+2}(\bD^2)$};  
  \node (D) [right of=C, node distance=4.5cm] {$\mathrm{BMod}(\Sigma_{g,2}).$};
  \node (B) [above of=D] {$\mathrm{Bun}_{\partial \Sigma_{g,2}}(T(\Sigma_{g,2}),\nu^*(\gamma))\hcoker \Diff(\Sigma_{g,2},\partial\Sigma_{g,2})$};

  \draw [->] (C) to node {$\phi$}(D);
  \draw [->] (C) to node {$$}(B);
  \draw [->] (B) to node {$h$} (D);
\end{tikzpicture}
\]

\end{cor}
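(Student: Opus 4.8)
The plan is to construct the lift directly, by feeding the section of \Cref{section} into the target-side half of the construction of $\Phi$ from \Cref{Phi}; I would \emph{not} try to restrict $\Phi$ along the map $\BdDiff(\bD^2,(2g+2)\text{ marked points})\to\mathrm{BBr}_{2g+2}(\bD^2)$, since that map has no section (the projection $\dDiff(\bD^2,{\bf z})\to\mathrm{Mod}(\bD^2,{\bf z})$ does not, cf.\ the remark after the proof of \Cref{surfaces}). Instead I reuse only the piece of the construction of $\Phi$ that produces the map into the bundle-map space of $\Sigma_{g,2}$.

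Recall that in the proof of \Cref{Phi}, writing $\mathcal{B}:=\mathrm{Bun}_{\partial \bD^2}(T(\bD^2),\nu^*(\gamma))\hcoker \Diff(\bD^2,{\bf z})\subset \mathrm{CBun}_{2g+2}(\bD^2)$ for the subspace of pairs $({\bf z},g)$ with $g$ a bundle map defined over all of $\bD^2$, the map $\Phi$ factors as a map $\BdDiff(\bD^2,{\bf z})\to\mathcal{B}$ followed by
\[
\beta\colon \mathcal{B}\longrightarrow Y:=\mathrm{Bun}_{\partial \Sigma_{g,2}}(T(\Sigma_{g,2}),\nu^*(\gamma))\hcoker \Diff(\Sigma_{g,2},\partial\Sigma_{g,2}),\qquad ({\bf z},g)\longmapsto(\Sigma_{\bf z},\,g\circ\pi_{\bf z}),
\]
where $\pi_{\bf z}\colon\Sigma_{\bf z}\to\bD^2$ is the branched double cover and $g\circ\pi_{\bf z}$ is turned into a bundle map $T(\Sigma_{\bf z})\to\nu^*(\gamma)$ through the identification $\pi_{\bf z}^*T\bD^2\cong T(\Sigma_{\bf z})$ fixed there. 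On the other side, the section $s$ of $\pi\colon\mathrm{CBun}_{2g+2}(\bD^2)\to C_{2g+2}(\bD^2)$ produced in \Cref{section} sends a configuration ${\bf z}$ to $({\bf z},s_0|_{\bD^2-{\bf z}})$ with $s_0$ the bundle map coming from the point foliation on all of $\bD^2$, so the image of $s$ lies inside $\mathcal{B}$. Using $C_{2g+2}(\bD^2)\simeq\mathrm{BBr}_{2g+2}(\bD^2)$, I would declare the lift to be $\ell:=\beta\circ s\colon\mathrm{BBr}_{2g+2}(\bD^2)\to Y$.

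It then remains to check $h\circ\ell\simeq\phi$. The map $h\colon Y\to\mathrm{BMod}(\Sigma_{g,2})$ is the structure map of $Y$ over moduli space, which forgets the bundle-map datum and keeps only the underlying Riemann surface; hence $h\circ\beta$ sends $({\bf z},g)$ to $\Sigma_{\bf z}$, i.e.\ it equals $\phi$ composed with the projection $\mathcal{B}\to C_{2g+2}(\bD^2)$. Since $s$ is a section of that projection, $h\circ\ell=h\circ\beta\circ s\simeq\phi$, which is the assertion. I expect essentially no obstacle beyond \Cref{Phi}: the one genuinely geometric step, namely the construction of $\beta$ (pulling a normal-trivialized $\mathrm{S}\Gamma_2$-structure back along the branched cover and matching it to the tangential and boundary data of $\Sigma_{g,2}$), was already carried out there, so what is left is the routine bookkeeping that $\beta$ and $s$ are continuous and that the triangle commutes up to coherent homotopy. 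In short, the corollary should follow formally from \Cref{Phi} together with \Cref{section}.
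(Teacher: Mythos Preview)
Your proposal is correct and is essentially the same as the paper's proof: the paper defines the lift directly by $\alpha({\bf z})=(\Sigma_{\bf z},\,s_0\circ\pi_{\bf z})$, which is exactly your composite $\beta\circ s$ written without the intermediate factorization through $\mathcal{B}$. Your extra remark that one cannot obtain this by restricting $\Phi$ along a nonexistent section is a nice clarification, but the construction itself coincides with the paper's.
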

\begin{proof}
Similar to the proof of the above theorem, we first define a map 
\[
\alpha: C_{|\bf z|}(\bD^2)\to \mathcal{H}(\Sigma_{g,2})\times \mathrm{Bun}_{\partial \Sigma_{g,2}}(T(\Sigma_{g,2}),\nu^*(\gamma)),
\]
that sends a configuration of points ${\bf z}$, to the pair $(\Sigma_{\bf z}, s_0\circ \pi_{\bf z})$, where $s_0$ is the base section in $\mathrm{Bun}_{\partial \bD^2}(T\bD^2,\nu^*(\gamma))$ as in the diagram \ref{eq:11}. Hence, the map $\alpha$ induces a lift of $\phi$
\[
\mathrm{BBr}_{2g+2}(\bD^2)\to \mathrm{Bun}_{\partial \Sigma_{g,2}}(T(\Sigma_{g,2}),\nu^*(\gamma))\hcoker \Diff(\Sigma_{g,2},\partial\Sigma_{g,2}).
\]
\end{proof}
\begin{rem}
Given that $\mathrm{Bun}_{\partial \Sigma_{g,2}}(T(\Sigma_{g,2}),\nu^*(\gamma))\hcoker \Diff(\Sigma_{g,2},\partial\Sigma_{g,2})$ is homology equivalent to $\BdDiff(\Sigma_{g,2},\partial \Sigma_{g,2})$, the \Cref{lift} implies that for any abelian group $A$ the induced map on homology 
\[
\phi_*: H_*(\mathrm{BBr}_{2g+2}(\bD^2);A)\to H_*(\mathrm{BMod}(\Sigma_{g,2});A) ,
\]
factors through 
\[
h_*:  H_*(\BdDiff(\Sigma_{g,2},\partial \Sigma_{g,2});A)\to H_*(\mathrm{BMod}(\Sigma_{g,2});A).
\]
In \cite[Theorem 3.23]{nariman2015stable} for a finite field $A=\bF_p$, we showed that the map $h_*$ is surjective for $*\leq (2g-2)/3$. Given that Segal and Tillmann in \cite{segal2007mapping} proved that $\phi_*$ is trivial in the stable range, one can deduce that the map
\[
H_*(\mathrm{BBr}_{2g+2}(\bD^2);\bF_p)\to H_*(\BdDiff(\Sigma_{g,2},\partial \Sigma_{g,2});\bF_p) 
\]
is also trivial in the stable range $*\leq (2g-2)/3$.
 \end{rem}

\Cref{lift} implies that there is no cohomological obstruction to lift the map $\phi$ in the diagram 
\begin{equation}\label{eq:12}
\begin{gathered}
 \begin{tikzpicture}[node distance=2cm, auto]
   \node (C) {$\mathrm{BBr}_{2g+2}(\bD^2)$};  
  \node (D) [right of=C, node distance=4.5cm] {$\mathrm{BMod}(\Sigma_{g,2}).$};
  \node (B) [above of=D] {$\BdDiff(\Sigma_{g,2},\partial \Sigma_{g,2})$};

  \draw [->] (C) to node {$\phi$}(D);
   \draw [->] (B) to node {$h$} (D);
\end{tikzpicture}
\end{gathered}
\end{equation}
to $\BdDiff(\Sigma_{g,2},\partial \Sigma_{g,2})$. Inspired by the work of \cite{salter2015nonrealizability}, we show that in fact there is no such a lift.
\begin{thm}
For $g>1$, the map $\phi$ cannot be lifted to $\BdDiff(\Sigma_{g,2},\partial \Sigma_{g,2})$ in the diagram \ref{eq:12}.
\end{thm}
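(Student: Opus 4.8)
The plan is to first recast the statement group-theoretically. Both $\mathrm{BBr}_{2g+2}(\bD^2)\simeq C_{2g+2}(\bD^2)$ and $\BdDiff(\Sigma_{g,2},\partial\Sigma_{g,2})$ are aspherical — the latter because $\dDiff(\Sigma_{g,2},\partial\Sigma_{g,2})$ is a discrete group, the former as recalled in \Cref{sec:2} — so a homotopy lift of $\phi$ through $h$ in the diagram \ref{eq:12} is exactly the datum of a group homomorphism
\[
\rho\colon \mathrm{Br}_{2g+2}(\bD^2)\longrightarrow \dDiff(\Sigma_{g,2},\partial\Sigma_{g,2}),
\]
well defined up to conjugacy, whose composite with the natural projection $\dDiff(\Sigma_{g,2},\partial\Sigma_{g,2})\to \mathrm{Mod}(\Sigma_{g,2})$ is conjugate to the Birman--Hilden homomorphism $\psi$; since that projection is surjective we may conjugate $\rho$ so that the composite equals $\psi$. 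It therefore suffices to prove that no such $\rho$ exists.

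For the second step I would adapt the argument of Salter--Tshishiku \cite{salter2015nonrealizability}, which rules out diffeomorphism lifts of point-pushing maps via Thurston's stability theorem \cite{thurston1974generalization}. Write $\pi_{\bf z}\colon\Sigma_{\bf z}\to\bD^2$ for the branched double cover with branch locus ${\bf z}$ and ${\bf w}=\pi_{\bf z}^{-1}({\bf z})$ for the set of $2g+2$ ramification (Weierstrass) points, so that $\pi_{\bf z}$ restricts to an honest double cover $\Sigma_{\bf z}-{\bf w}\to\bD^2-{\bf z}$ and every element in the image of $\psi$ has a representative commuting with the deck involution. The plan is to study the $\rho$-action near the ramification points: for a suitable subgroup $P\le\mathrm{Br}_{2g+2}(\bD^2)$ — one whose $\psi$-image stabilises a chosen ramification point $w$ up to isotopy and whose abelianisation is controlled — Thurston stability should force the (appropriately defined) derivative cocycle of $\rho|_P$ at $w$ to be unipotent, while the local geometry of $\pi_{\bf z}$ near $w$ — modelled on $u\mapsto u^2$ — forces it to have a hyperbolic part whenever the corresponding braid has hyperbolic linear part near the branch point. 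This is precisely the incompatibility that, in \Cref{induceup}, already obstructs differentiability of the naive covering-space lift $\psi'$; the point is to show that it persists for \emph{every} lift. The numerical hypothesis $g>1$ is exactly what yields $|{\bf w}|=2g+2\geq 5$, placing the punctured surface $\Sigma_{g,2}-{\bf w}$ (which has $g+2b\geq 2$) in the range where \cite{salter2015nonrealizability} applies.

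The main obstacle — and the reason the theorem does not follow formally from \cite{salter2015nonrealizability} — is that an arbitrary lift $\rho$ need neither preserve the ramification set ${\bf w}$ nor commute with the deck involution, so one cannot simply restrict $\rho$ to a diffeomorphism group of the punctured surface $\Sigma_{g,2}-{\bf w}$ and quote their theorem. The crux is therefore a rigidity statement: any lift must, after conjugation, behave near each ramification point like a lift of a genuine point-pushing map, so that the local branched-cover model and Thurston stability can be brought to bear. Establishing that the global constraint of being a group homomorphism pins down the behaviour of $\rho$ at the $2g+2$ Weierstrass points is where the work lies; once this is in place, the stability argument concludes exactly as in the punctured-disk case of Salter--Tshishiku.
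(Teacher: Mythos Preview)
Your first step---reducing to the existence of a group homomorphism $\rho\colon \mathrm{Br}_{2g+2}(\bD^2)\to \dDiff(\Sigma_{g,2},\partial\Sigma_{g,2})$ lifting $\psi$---is correct and matches the paper. After that, however, your plan diverges from the paper's argument and carries a genuine gap.

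The gap is exactly the step you flag yourself: you need a rigidity statement forcing any lift $\rho$ to behave, near the Weierstrass points, like a lift through the branched cover, so that you can run a local Thurston-stability argument there. You do not prove this; you only assert that ``once this is in place'' the rest follows. But there is no a priori reason an abstract homomorphism $\rho$ should fix, or even permute, the set ${\bf w}$, and without that you have no fixed interior point at which to compare derivatives. So as written the proposal is a strategy, not a proof.

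The paper avoids this difficulty entirely by choosing a different anchor point: any $x\in\partial\Sigma_{g,2}$. Every element of $\dDiff(\Sigma_{g,2},\partial\Sigma_{g,2})$ is the identity on a neighbourhood of $\partial\Sigma_{g,2}$, so the whole image of $\rho$ already lies in the Thurston stabiliser $C^1\text{-Stab}(x)$, with no rigidity argument needed. Thurston's stability theorem then says this group is locally indicable. The contradiction comes from feeding in the commutator subgroup $[\mathrm{Br}_{2g+2},\mathrm{Br}_{2g+2}]$: by a theorem of Gorin--Lin \cite{MR0251712} this subgroup is finitely generated and \emph{perfect} once $2g+2\geq 5$, i.e.\ for $g>1$; since $\psi$ (hence $\rho$) is injective, its image is a finitely generated perfect subgroup of a locally indicable group, which is impossible.

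Note in particular that the hypothesis $g>1$ enters through Gorin--Lin (perfectness of the braid commutator), not through the Salter--Tshishiku threshold $|{\bf z}|\geq 5$ on a punctured surface as you suggest. The paper's argument is both shorter and more robust than the ramification-point approach you outline.
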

\begin{proof}Suppose the contrary. If we can lift $\phi$, then we have the commutative diagram by taking the fundamental group of the diagram \ref{eq:12}
\begin{equation}\label{eq:13}
\begin{gathered}
 \begin{tikzpicture}[node distance=2cm, auto]
   \node (C) {$\mathrm{Br}_{2g+2}(\bD^2)$};  
  \node (D) [right of=C, node distance=4.5cm] {$\mathrm{Mod}(\Sigma_{g,2}).$};
  \node (B) [above of=D] {$\dDiff(\Sigma_{g,2},\partial \Sigma_{g,2})$};

   \draw [dotted, ->] (A) to node {$g$}(B);
  \draw [->] (C) to node {$\psi$}(D);
   \draw [->] (B) to node {$p$} (D);
\end{tikzpicture}
\end{gathered}
\end{equation}
Since $\psi$ is injective so is the map $g$ and therefore $g$ in particular injects the commutator subgroup $[\mathrm{Br}_{2g+2}(\bD^2),\mathrm{Br}_{2g+2}(\bD^2)]$ into $\dDiff(\Sigma_{g,2},\partial \Sigma_{g,2})$. Let us denote this subgroup of $\dDiff(\Sigma_{g,2},\partial \Sigma_{g,2})$ by $H$. By the theorem of Gorin and Lin \cite{MR0251712}, the group $H$ is a finitely generated perfect group for $g>1$.

Now we use Thurston's stability theorem \cite{thurston1974generalization}  to get a contradiction. Thurston's stability theorem says that for a manifold $M$ and a point $x\in M$, the group $$C^1-\text{Stab}(x):=\{ f\in \dDiff(M): f(x)=x, \text{ and } Df_x=Id\},$$ is {\it locally indicable}, meaning that every finitely generated subgroup in $C^1-\text{Stab}(x)$ surjects to $\bZ$. Let $M\cong\Sigma_{g,2}$ and $x\in \partial \Sigma_{g,2}$. Since every element of $\dDiff(\Sigma_{g,2},\partial \Sigma_{g,2})$ fixes a neighborhood of the boundary, the group $H$ is a subgroup of $C^1-\text{Stab}(x)$. But since $H$ is perfect, there is no nontrivial homomorphism from $H$ to $\bZ$, which is a contradiction.
\end{proof}
To show that the $\Phi$ in \Cref{Phi} is a nontrivial map, we prove it induces a  nontrivial map on homology.  Given that $\mathrm{Bun}_{\partial \Sigma_{g,2}}(T(\Sigma_{g,2}),\nu^*(\gamma))\hcoker \Diff(\Sigma_{g,2},\partial\Sigma_{g,2})$ is homology equivalent to $\BdDiff(\Sigma_{g,2},\partial \Sigma_{g,2})$, the induced map on homology is
\[
H_*( \BdDiff(\bD^2, (2g+2) \text{ marked points});\bZ)\to H_*(\BdDiff(\Sigma_{g,2},\partial\Sigma_{g,2});\bZ).
\]
Unlike the theorem of Tillmann and Segal \cite[Corollary 4.3]{segal2007mapping} that proved that $\phi$ induces a trivial map 
\[
H_*(\mathrm{Br}_{2g+2}(\bD^2);\bZ)\to H_*(\mathrm{Mod}(\Sigma_{g,2});\bZ)
\]
in the stable range, we show
\begin{thm}
For all $g$, the induced map by $\Phi$ on the third homology
\[
H_3( \BdDiff(\bD^2, (2g+2) \text{ marked points});\bZ)\to H_3(\BdDiff(\Sigma_{g,2},\partial\Sigma_{g,2});\bZ).
\]
is nonzero.
\end{thm}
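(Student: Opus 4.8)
The plan is to detect the map by pairing it with a Godbillon--Vey class. From foliation theory the truncated Weil algebra of codimension $2$ produces a class $h_1c_1^2\in H^5(\mathrm{BS}\Gamma_2;\bR)$ ($h_1$ the Godbillon class, $c_1$ the first Chern class of the normal bundle), which by the work of Thurston and Heitsch is nonzero and remains nonzero after restriction to the fibre $\overline{\mathrm{BS}\Gamma_2}$ of $\nu\colon\mathrm{BS}\Gamma_2\to\mathrm{BGL}_2(\bR)^+$. Applying this to the universal codimension $2$ foliation on the total space of the universal flat $\Sigma_{g,2}$-bundle $\pi\colon E\to\BdDiff(\Sigma_{g,2},\partial\Sigma_{g,2})$ --- equivalently on the homology-equivalent space $\mathrm{Bun}_{\partial\Sigma_{g,2}}(T(\Sigma_{g,2}),\nu^*(\gamma))\hcoker\Diff(\Sigma_{g,2},\partial\Sigma_{g,2})$ --- and integrating over the compact fibre $\Sigma_{g,2}$ (the foliation is the horizontal one near $\partial\Sigma_{g,2}$, so there is no boundary contribution), I would define
\[
\mathrm{GV}:=\pi_!\bigl(h_1c_1^2\bigr)\in H^3\bigl(\BdDiff(\Sigma_{g,2},\partial\Sigma_{g,2});\bR\bigr).
\]
Being a secondary foliated class, $\mathrm{GV}$ is not pulled back from $\mathrm{BMod}(\Sigma_{g,2})$ (it would vanish there by Bott's theorem), so the Tillmann--Segal vanishing of $\phi_*$ is no obstruction.

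Next I would compute $\Phi^*\mathrm{GV}$. By the construction of $\Phi$ in \Cref{Phi}, the structure that $\Phi$ pulls back is the $\bD^2$-bundle $q$ over $\BdDiff(\bD^2,(2g+2)\text{ marked points})$ carrying the Haefliger structure $\tilde f$ --- which, as shown in the proof of \Cref{Phi}, is genuinely defined over all of $\bD^2$, so that its normal bundle, being the vertical tangent bundle of a $\Diff(\bD^2,\partial\bD^2)$-bundle, is framed --- followed by the fibrewise branched double cover $\pi_{\bf z}\colon\Sigma_{\bf z}\to\bD^2$. Since $\pi_{\bf z}$ has degree $2$ and fibre integration is multiplicative along it (the branch locus has measure zero), this gives
\[
\Phi^*\mathrm{GV}\;=\;2\cdot q_!\bigl(\tilde f^*h_1c_1^2\bigr)\;=:\;2\,\widetilde{\mathrm{GV}}\in H^3\bigl(\BdDiff(\bD^2,(2g+2)\text{ marked points});\bR\bigr),
\]
where, because the normal bundle is framed there, $\tilde f^*h_1c_1^2$ is pulled back from the nonzero restriction of $h_1c_1^2$ to $\overline{\mathrm{BS}\Gamma_2}$. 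Hence it suffices to prove $\widetilde{\mathrm{GV}}\neq 0$: then $\Phi^*$ is nonzero on $H^3(-;\bR)$, so $\Phi_*$ is nonzero on $H_3(-;\bR)=H_3(-;\bZ)\otimes\bR$, and therefore on $H_3(-;\bZ)$.

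To see $\widetilde{\mathrm{GV}}\neq 0$ I would evaluate it on an explicit $3$-cycle. Concretely: choose a closed oriented $3$-manifold $B$ together with a flat $\bR^2$-bundle over it, i.e.\ a homomorphism $\rho\colon\pi_1(B)\to\dDiff_c(\bR^2)$ with $\bR^2=\mathrm{int}(\bD^2)$ and all diffeomorphisms supported in a fixed small ball away from the $2g+2$ marked points, such that the associated codimension $2$ horizontal Haefliger structure on the total space $E^5$ satisfies $\int_E h_1c_1^2\neq 0$. Composing $\rho$ with the inclusion of these compactly supported diffeomorphisms into $\dDiff(\bD^2,(2g+2)\text{ marked points})$ (which in particular fix the marked points) yields a map $B\to\BdDiff(\bD^2,(2g+2)\text{ marked points})$ along which $\langle\widetilde{\mathrm{GV}},[B]\rangle=\int_E h_1c_1^2\neq 0$, completing the argument.

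The hard part is the last step, namely producing such a flat $\bR^2$-bundle --- equivalently, the nonvanishing of the degree-$5$ Godbillon--Vey class on flat $\bR^2$-bundles over a $3$-dimensional base. This is the codimension-$2$ counterpart of Thurston's nonvanishing theorem for the classical Godbillon--Vey invariant, and I would build the example by hand: start from a codimension-$1$ foliation with nonzero Godbillon--Vey number (a $\mathrm{PSL}_2(\bR)$-type example) and introduce, via a suspension/plumbing construction, the extra transverse curvature needed for $h_1c_1^2$ to survive --- the delicate point being to keep the Godbillon form and the first Chern class of the normal bundle simultaneously nontrivial in the relevant degree, which does not happen for free. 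One also has to check that the construction stays inside the diffeomorphisms fixing the marked points and standard near $\partial\bD^2$, so that all the fibre integrations above are legitimate; these verifications are routine.
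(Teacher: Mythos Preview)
Your overall strategy coincides with the paper's: both arguments detect $\Phi_*$ by the fibrewise Godbillon--Vey classes in $H^5(\mathrm{BS}\Gamma_2;\bR)$, both factor the source through $\BdDiff_c(\bR^2)$ by supporting everything in a small disk away from the marked points, and both pick up the factor $2$ from the double branched cover $\pi_{\bf z}$. So the reduction to showing that the integrated Godbillon--Vey class is nonzero on $H_3(\BdDiff_c(\bR^2);\bQ)$ is exactly the reduction the paper makes.

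The gap is precisely your ``hard part''. You propose to build by hand a flat $\bR^2$-bundle over a closed $3$-manifold with $\int h_1c_1^2\neq 0$, starting from a codimension-one example and adding transverse curvature via ``suspension/plumbing''. This is not carried out, and the sketch is not convincing: there is no obvious mechanism by which a codimension-one Godbillon--Vey number feeds into $h_1c_1^2$ for a codimension-two foliation, and known constructions of nontrivial $h_1c_1^2$ (or $h_1c_2$) do not arise this way. The paper avoids any explicit construction entirely. It uses Mather--Thurston in the form $\BdDiff_c(\bR^2)\xrightarrow{H_*\text{-iso}}\Omega^2\overline{\mathrm{BS}\Gamma_2}$, the Haefliger--Thurston $3$-connectivity of $\overline{\mathrm{BS}\Gamma_2}$ together with rational Hurewicz to get a surjection $H_3(\Omega^2\overline{\mathrm{BS}\Gamma_2};\bQ)\twoheadrightarrow H_5(\overline{\mathrm{BS}\Gamma_2};\bQ)$, and finally Rasmussen's continuous-variation theorem to see that $(h_1c_2,h_1c_1^2)\colon H_5(\overline{\mathrm{BS}\Gamma_2};\bQ)\to\bR^2$ is surjective. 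This homotopy-theoretic route replaces your missing explicit example and is what you should invoke to close the argument.
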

\begin{proof}
Embed $\bR^2$ as the interior of a small disk into $\bD^2-{\bf z}$ so that it lifts to two disjoint disks in $\Sigma_{\bf z}$ under the map $\pi':\Sigma_{\bf z}-\pi_{\bf z}^{-1}({\bf z})\to \bD^2-{\bf z}$. This embedding induces the following commutative diagram
\[
 \begin{tikzpicture}[node distance=4.3cm, auto]
  \node (A) {$ \BdDiff(\bD^2, {\bf z})$};
    \node (C) [below of= A, node distance=1.8cm ] {$\BdDiff(\Sigma_{\bf z},\partial\Sigma_{\bf z})$};  
  \node (E) [left of=C, node distance= 3.5cm]{$\BdDiff_c(\bR^2)$};
  \node (D) [right of=C, node distance=5.2cm] {$\mathrm{Bun}_{\partial \Sigma_{g,2}}(T(\Sigma_{g,2}),\nu^*(\gamma))\hcoker \Diff(\Sigma_{\bf z},\partial\Sigma_{\bf z}).$};
   \draw [->] (A) to node {$$}(D);
  \draw [->] (C) to node {$\stackrel{H_*}{\cong}$}(D);
  \draw [->] (E) to node {$$}(C);
    \draw [->] (E) to node {$$}(A);
\end{tikzpicture}
\]
Therefore, we have the following commutative diagram on homology groups
\begin{equation}\label{k}
\begin{gathered}
 \begin{tikzpicture}[node distance=4.3cm, auto]
  \node (A) {$H_3( \BdDiff(\bD^2, {\bf z});\bZ)$};
  \node (E) [left of=C]{$H_3(\BdDiff_c(\bR^2);\bZ)$};
  \node (C) [below of= A, node distance=1.8cm ] {$H_3(\BdDiff(\Sigma_{\bf z},\partial\Sigma_{\bf z});\bZ),$};  
  \draw [->] (E) to node {$k$}(C);
  \draw [->] (A) to node {$$} (C);
    \draw [->] (E) to node {$$}(A);
\end{tikzpicture}
\end{gathered}
\end{equation}
so if we show the  map $k$ is nonzero, we are done.  We use continuous variation of Godbillon-Vey classes to show that $k$ induces a nontrivial map on homology with rational coefficients. There are two universal Godbillon-Vey classes $h_1c_2$ and $ h_1c_1^2$ for codimension two foliations (see \cite[Chapter 2]{pittie1976characteristic} for the definition of these classes) that live in $H^5(\mathrm{BS}\Gamma_2;\bR)$. Therefore for any codimension $2$ Haefliger structure $\mathcal{F}$ on a space $X$, we have two classes $h_1c_2$ and $ h_1c_1^2$ in $H^5(X;\bR)$ associated to $\mathcal{F}$. Now consider the universal flat surface bundle

\[
 \begin{tikzpicture}[node distance=2.3cm, auto]
  \node (A) {$\Sigma_{\bf z}$};
  \node (E) [right of=A]{$\Sigma_{\bf z}\hcoker \dDiff(\Sigma_{\bf z},\partial \Sigma_{\bf z})$};
  \node (C) [below of= E, node distance=1.5cm ] {$\BdDiff(\Sigma_{\bf z},\partial\Sigma_{\bf z}).$};  
  \draw [->] (A) to node {$$}(E);
    \draw [->] (E) to node {$$}(C);
\end{tikzpicture}
\] Since the bundle is flat there is a codimension $2$ Haefliger structure on the total space (that is in fact transverse to the fibers), the characteristic classes associated to this structure live in $H^5(\Sigma_{\bf z}\hcoker \dDiff(\Sigma_{\bf z},\partial \Sigma_{\bf z});\bR)$. If we integrate these two classes on this flat $\Sigma_{\bf z}$-bundle, we obtain a map
\[
H_3(\BdDiff(\Sigma_{\bf z},\partial\Sigma_{\bf z});\bQ)\xrightarrow{(\int h_1c_2,\int h_1c_1^2)} \bR^2.
\]
Note that in the diagram \ref{k},  the map $k$ is induced by embedding two disjoint disks into the the surface, hence if we compose $k$ with the above map, we obtain
\[
H_3(\BdDiff_c(\bR^2);\bQ)\xrightarrow{(2\int h_1c_2,2\int h_1c_1^2)} \bR^2.
\]
Hence, we only need to show $H_3(\BdDiff_c(\bR^2);\bQ)\xrightarrow{(\int h_1c_2,\int h_1c_1^2)} \bR^2$ is nontrivial. Let $\overline{\BDiff_c(\bR^2)}$ be the homotopy fiber of the map
\[
\BdDiff_c(\bR^2)\to \BDiff_c(\bR^2).
\]
But the topological group $\Diff_c(\bR^2)$ is contractible, so $\overline{\BDiff_c(\bR^2)}\simeq \BdDiff_c(\bR^2)$. By  Thurston's theorem \cite{thurston1974foliations}, we know that there is a map 
\[
\overline{\BDiff_c(\bR^2)}\to \Omega^2\overline{\mathrm{BS}\Gamma_2}
\]
that induces a homology isomorphism. By the theorem of Haefliger and Thurston \cite[Theorem 10.3]{bott1972lectures}, we know $\overline{\mathrm{BS}\Gamma_2}$ is at least $3$-connected. Therefore, we have
\[
H_3(\BdDiff_c(\bR^2);\bQ)\xrightarrow{\cong}H_3( \Omega^2\overline{\mathrm{BS}\Gamma_2};\bQ)\twoheadrightarrow H_5(\overline{\mathrm{BS}\Gamma_2};\bQ) \twoheadrightarrow \bR^2.
\]
The first map is an isomorphism by  Thurston's theorem. The second map is the suspension map which  is surjective by the rational Hurewicz theorem.  The third map is given by Godbillon-Vey classes
\[
H_5(\overline{\mathrm{BS}\Gamma_2};\bQ) \xrightarrow{(\int h_1c_2,\int h_1c_1^2)} \bR^2
\]
which is surjective as a corollary of the theorem of Rasmussen \cite{rasmussen1980continuous}.
\end{proof}
\bibliographystyle{alpha}
\bibliography{reference}
\end{document}